\documentclass{article}

\usepackage{amsmath,amssymb,mathtools,amsthm,bm}
\usepackage{xcolor,graphicx,float,enumerate}

\newcommand{\ee}{\varepsilon}

\newcommand{\cC}{{\mathcal C}}

\def\LL{{\mathcal L}}

\def\HH{\mathcal H}

\newcommand{\RR}{{\mathbb R}}
\newcommand{\Ln}{\mathcal L^n}
\newcommand{\m}{\mu}

\newcommand{\on}{\omega_n}

\DeclareMathOperator{\diver}{div}

\usepackage[utf8]{inputenc}
\DeclareUnicodeCharacter{00A0}{ }

\usepackage{geometry}
\geometry{
	a4paper,
	total={170mm,257mm},
	left=20mm,
	top=20mm,
	bottom=30mm,
}

\newtheorem{proposition}{Proposition}[section]
\newtheorem{theorem}{Theorem}[section]

\newtheorem{lemma}{Lemma}[section]
\theoremstyle{definition}

\newtheorem{remark}{Remark}[section]

\usepackage[hidelinks]{hyperref}
\usepackage[nameinlink,noabbrev]{cleveref}

\numberwithin{equation}{section}

\begin{document}
\title{Steiner symmetrization for anisotropic quasilinear equations  \\ via partial discretization}

\author{F. Brock\thanks{Institute of Mathematics, University of Rostock} %
	\and J.I. Díaz%
	\thanks{Departamento de Análisis Matemático y Matemática Aplicada, Universidad Complutense de Madrid}%
	~\thanks{Instituto de Matemática Interdisciplinar, Universidad Complutense de Madrid}%
	 \and %
	A. Ferone%
	\thanks{Dipartimento di Matematica e Fisica, Università della Campania ``Luigi Vanvitelli"} %
	\and %
	D. Gómez-Castro\footnotemark[2] \footnotemark[3] %
	\and %
	A. Mercaldo%
	\thanks{Dipartimento di Matematica e Applicazioni ``R. Caccioppoli", Università di Napoli Federico II}
	}
\maketitle

\begin{abstract}
	In this paper we obtain comparison results for the quasilinear equation $-\Delta_{p,x} u - u_{yy} = f$ with homogeneous Dirichlet boundary conditions by Steiner rearrangement in variable $x$, thus solving a long open problem. In fact, we study a broader class of anisotropic problems. Our approach is based on a finite-differences discretization in $y$, and the proof of a comparison principle for the discrete version of the auxiliary problem $A U - U_{yy} \le \int_0^s f$, where $AU = (n\on^{1/n}s^{1/n'} )^p (- U_{ss})^{p-1}$. We show that this operator is T-accretive in $L^\infty$. We extend our results for $-\Delta_{p,x}$ to general operators of the form $-\diver (a(|\nabla_x u|) \nabla_x u)$ where $a$ is non-decreasing and behaves like $| \cdot |^{p-2}$ at infinity.
\end{abstract}

\section*{Introduction}
The aim of this paper is to extend the result on \cite{ADTL} to  a class of  nonlinear  elliptic  problems   whose prototype is  
\begin{equation}
	\label{eq:motivation}
	\begin{cases}
		-\Delta_{p,x}u-u_{yy} =f &  \mbox{in }\Omega\\
		u=0 &\mbox{on } \partial \Omega\,.
	\end{cases}
\end{equation}
For the sake of simplicity, in this paper we focus on the case
\begin{equation}
 \Omega =\Omega_1 \times \Omega_2\,, \qquad \Omega_1 \subset \mathbb R^n \text{ open, bounded of class } \mathcal C^2 \qquad \text{ and } \qquad  \Omega_2 = (0,1).
\end{equation}
We denote $-\Delta_{p,x} u = -\diver (|\nabla_x u|^{p-2} \nabla_x u)$ where $\nabla_x u = (\frac{\partial u}{\partial x_1}, \cdots, \frac{\partial u}{\partial x_n})$. 
Such problem can be regarded as anisotropic since the growth of the operator with respect to the partial derivatives of $u$ in $x$ and $y$ is governed by different powers (see, e.g. \cite[Chapter 1, Section 4.2]{diaz+antontsev2012energy+methods}). \\

Very often, in many relevant applications, the materials and phenomena have an important \textit{anisotropy}, presenting different properties in different directions, in contrast to the more usual isotropy property. 
Anisotropy leads to mathematical models presenting peculiar constitutive
laws which corresponds to material's physical or mechanical properties with different behaviour according the directions. The spectrum of fields in which such situations arise is very wide: Computer Science (e.g. Image processing), Physics (e.g. Atmospheric Radiative Transfer), Chemistry (e.g. Materials Science \cite{newnham2005materials}), Geophysics and Geology, Engineering (e.g. wastewater reactors \cite{Diaz2016}) or Neuroscience. 

Commonly, this anisotropy comes as a diffusion operator $D$
which is described by three principal different directional coefficients. 
This is the case, for example, of thermal and electrical conductivity in heterogeneous media (e.g. for the diffusion of Ni in Olivine) or of study of cristal (see \cite{wulff1901frage,fonseca1991wulff}). 
Moreover, many of the recent innovations in the field of electroceramics have exploited the anisotropy of nonlinearities modelling different material properties such as electric field, mechanical stress or temperature (see, e.g. \cite[Chapter 15]{newnham2005materials}).
We also mention here that it is well-known that homogenisation techniques generate anisotropic diffusion limit problems (see, e.g., \cite{Lions2005,zhikov1999homogenization}, and its references).

From the mathematical point of view, it is useful to get some relations between the solution of a nonlinear anisotropic boundary value problem and the solution of a similar problem, posed in a simpler geometry, for which we can compute explicit information. 
In order to obtain as much information as possible, we would like for this second problem to be as similar as possible to the original one.
This philosophy was developed in the framework of linear second order reaction-diffusion problems by means of the so-called Steiner rearrangement since the nineties of the past century (see, e.g. \cite{alvino+trombetti+diaz1992steiner,ADTL,ALTparabolic}). 
This approach has proved specially useful for the
study of the qualitative behaviour of solutions (see, e.g. \cite[Chapter 1, Section 4.2]{diaz+antontsev2012energy+methods}). 
Nevertheless, the class of operators for which  Steiner rearrangement has been applied is limited, due to technical difficulties. The question of whether it can be applied to non-linear second order problems ``split'' as operators in $x$ and $y$, such as \eqref{eq:motivation}, has been open for 15 years. A related question (but developed with different techniques) concerns anisotropic symmetrisation (see, for example, \cite{AFLTconvexsimmetrization,vanSchaftingen2006anisotropic}).\\

The main goal of this paper is to present new ideas when considering such type of anisotropic nonlinear diffusion operator, trying to extend the main result on \cite{ADTL} to this nonlinear framework. For the moment being, the peculiar formulation we will consider will use the crucial assumption that the diffusion is linear in at least one direction, nevertheless we hope that the ideas could be adapted also without this technical requirement. \\

In the seventies, G. Talenti developed symmetrization techniques in pioonering papers \cite{Talenti1976,Talenti1977} (see also \cite{Weinberger,Mazja}) that allow to obtain a priori estimates on problems of the form
\begin{equation*}
	\begin{cases}
		-\diver( a(|\nabla u|) \nabla u )=f &\mbox{in }   \Omega \\
		u=0 &\mbox{on } \partial\Omega\,.
	\end{cases}
\end{equation*}
where $\Omega$ is an open subset of $\RR^n$ and the function $a$ is monotone satisfying the ellipticity condition
$$
a(\xi)\xi\ge |\xi|^p\,, \qquad \forall \xi \in \RR^n.
$$
Estimates of $u$ in $L^p$ norms  can be derived by estimating the same norm of the solution $v$ to the spherically symmetric problem
\begin{equation}\label{intp}
	\begin{cases}
		-\Delta_{p}v=f^\star  &\mbox{in }   \Omega^\star \\
		v=0 &\mbox{on } \partial\Omega^\star\,.	\end{cases}
\end{equation}
where $\Omega^\star$ is the ball of $\RR^n$ centered at zero having the same Lebesgue measure of $\Omega$, $f^\star$ is the  Schwarz rearrangement of $f$, that is the the spherically symmetric function, decreasing with respect to $|x|$, whose level sets $\{x\in \Omega:|f^\star(x)|>t  \}$ have the same measure of the corresponding level sets of $f$, $\{x\in \Omega:|f(x)|>t  \}$. We will introduce it formally below. 

 G. Talenti's approach has been successfully extended in various directions such as, for example, operators having lower order terms, degenerate operators, parabolic equations, different boundary value problems (see, \cite{AlvinoTrombettiricerche,ALTparabolic,Diaz1991,Talenti1985,TalentiArt} and references therein). \\

 To state the main results of this paper we need to introduce some definitions. Given a bounded set $\omega \subset \RR^n$, we define its Schwarz symmetrisation, $\omega^\star$, as the unique ball centred at $0$ such that $\mathcal L^n (\omega^\star)=\mathcal L^n (\omega)$ where  $\LL^{n}(\omega)$ denotes the $n$-dimensional Lebesgue measure of the set $\omega$.
For an open bounded set $\Omega \subset \RR^N\equiv\RR^n\times\RR^m$, 
for any $y\in\RR^m$, the $y$-section of $\Omega$ is denoted by $\Omega_y$  and defined by 
$
\Omega_y :=\{x\in\RR^n: (x,y)\in\Omega\}
$
and we define the Steiner-symmetrised version of $\Omega$ as 
\begin{equation*}
	\Omega^\# = \bigcup_{y \in \RR^m} ( \Omega_y )^\star \times \{y\}.
\end{equation*} 
When it does not lead to confusion we use the notation $|\omega|$ for the Lebesgue measure of $\omega$ of adequate dimension.

If $u$ is a function defined in  $\Omega \subset \RR^N\equiv\RR^n\times\RR^m$, 
for any $y\in\RR^m$, we consider the function 
\begin{equation}\label{steinf}
x\in \Omega_y\mapsto  u(x,y)\in \RR\,.
\end{equation}
The distribution function and  the decreasing rearrangement of this function \eqref{steinf} are the so-called {\it distribution function (in codimension $n$)} of $u$ and its   {\it decreasing rearrangement (in codimension $n$)} respectively, i.e. 
$$
\m_u(t,y)=\Ln\left (\{x\in\Omega_y :u(x,y)>t\}\right ), \qquad (t,y)\in[0,+\infty )\times\RR^m\,
$$
and
$$
u^*(s,y)=\sup\{t\ge 0: \m_u (t,y)>s\}\, , \quad (s,y)\in \Omega_y^*\times\RR^m\, ,
$$
respectively, where $\omega_n$ is the measure of the unit ball of $\RR^n$.  We define the {\it Steiner symmetrised version of $u$} as
\begin{equation*}
	u^\# (x,y) = u^* (\on |x|^n, y), \qquad (x,y) \in \Omega^\#.
\end{equation*}
Notice that it is spherically symmetric in $x$, and radially non-increasing in this variable. When there is no $y$ variable (i.e. $m = 0$), this is called the {\it Schwarz symmetrised version of $u$}.
 
G. Talenti developed the theory with no $y$ variable ($m = 0$). The Steiner symmetrisation ($m > 0$) is studied in \cite{alvino+trombetti+diaz1992steiner,ADTL,ALTparabolic, bandlekawhol} for the case of linear elliptic operators. The type of comparison results that can be found in the literature are
$$
u^*(s)\le v^*(s)\, , \quad \text{ when $m=0$,}
$$
or 
$$
\int_0^s u^*(\sigma,y )\,d\sigma \le \int_0^s v^*(\sigma,y )\,d\sigma \,, 
\quad \mbox{ when $m>0$,}
$$
for a.e.\ $s\in (0,|\Omega|)$ and a.e. in $y \in \Omega_2$, which easily imply the a priori estimate on $u$  in $L^p$ or Orlicz norms.
Similar results for $m > 0$ have also been proven in a more recent paper \cite{BCFM} by using a simpler approach; Neumann boundary value problems have been studied in \cite{FeroneMercaldo} (see also \cite{chiacchio}). The implications of these kinds of mass comparison are deep. For example, for any $q \ge 1$ it allow us to have estimates on the $L^q$ norm of $u$
\begin{equation*}
	\int_{\Omega_1 \times\Omega_2} |u (x,y)|^q dx dy \le \int_{\Omega_1^\star \times\Omega_2} |v (x,y)|^q dx dy.
\end{equation*}
This is useful because the radially symmetric problem can often be estimated easily by direct techniques. 

\section{Main result and structure of the paper}

We propose a new approach, that covers a wider class of problems of the form
\begin{equation}
	\tag{P}
	\label{eq:P}
	\begin{cases}
	-\diver_x \Big( a(|\nabla_x u| ) \nabla_x u \Big) -u_{yy}=f &\mbox{in } \Omega_1\times\Omega_2\\
	u=0 &\mbox{on } \partial(\Omega_1\times\Omega_2)\,.
\end{cases}
\end{equation}
where
\begin{align}
	\tag{H$_1$}
	\label{eq:a general}
	a : (0,+\infty) \to (0,+\infty) \text{ such that } \beta(t) = \begin{dcases}
	a(t) t & t > 0 \\
	0 & t = 0
	\end{dcases} \text{ is  non-decreasing}
\end{align}
and, setting $A(t) = t \beta(t) = t^2 a(t)$, for some $C_1,C_2 > 0$ and $p > 1$ we have
\begin{align}
	\tag{H$_2$}
	\label{eq:beta p estimates}
	C_1  ( t^p - 1) \le   A(t), \qquad \beta(t)  \le C_2 (t^{p-1} + 1), \qquad \text{and} \qquad A(t) \text{ is convex}.
\end{align}

Notice that the solution of \eqref{eq:P} is a minimiser of the energy
\begin{equation}
\label{eq:energy}
J(u) = \int_{\Omega} \Big( B(|\nabla_x u|) + |\nabla_y u|^2 - f u \Big) dx\, dy
\end{equation}
where
\begin{equation}
\label{eq:B and b}
B(t) = \int_0^t \beta(s) ds.
\end{equation}
Since $\beta$ is non-decreasing, $u\mapsto \int_{\Omega} B(|\nabla_x u|)$ is convex.  On the other hand, $\int_{\Omega}|\nabla_y u|^2$ is strictly convex in $L^2(\Omega_1; H_0^1(\Omega_2))$, we deduce that $J$ is strictly convex. Therefore, a unique weak solution of \eqref{eq:P} exists. The natural space of solutions for this problem is precisely
\begin{equation*}
	X^p(\Omega) = \{  u \in W^{1,1}_0 (\Omega) : |\nabla_x u| \in L^p (\Omega), |\nabla_y u| \in L^2 (\Omega) \}.
\end{equation*}
If one does not introduce condition the power bounds in \eqref{eq:beta p estimates} then the energy must be found in the Orliz class $B(|\nabla_x u|) \in L^1 (\Omega)$ (see, e.g., \cite{Talenti1977}).\\

Our aim is to prove the following result
\begin{theorem}\label{thm:main}
Let $a$ satisfy \eqref{eq:a general} and \eqref{eq:beta p estimates}, $0\le f \in L^{ \max \{2,p\} } (\Omega)$, $u \in X^p (\Omega)$ be the weak solution of the  problem \eqref{eq:P} and $v\in X^p (\Omega^\#)$ be the weak solution of the symmetrised problem
\begin{equation}
\tag{P$^\#$}
\label{eq:P sharp}
\begin{cases}
	-\diver_x( a(|\nabla_x v|) \nabla_x v  )-v_{yy}=f^\# &\mbox{in } \Omega^\# \\
	v=0 &\mbox{on } \partial \Omega^\#\,.
\end{cases}
\end{equation}
Then, for the decreasing rearrangements $u^*$ and $v^*$ we have the following mass comparison:
\begin{equation}
	\label{aim}
	\int_0^s u^*(\sigma,y)d\sigma\le \int_0^s v^*(\sigma,y)d\sigma\,, \qquad \mbox{for all } s\in [0,|\Omega_1|] \mbox{ and for a.e. } y\in\Omega_2\,.
\end{equation}
\end{theorem}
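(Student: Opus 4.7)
Setting $U(s,y) := \int_0^s u^*(\sigma, y)\, d\sigma$ and $V(s,y) := \int_0^s v^*(\sigma,y)\, d\sigma$, the desired inequality \eqref{aim} is exactly $U \le V$ on $(0,|\Omega_1|)\times \Omega_2$. The plan is to show that $U$ is a subsolution, and $V$ a solution, of an auxiliary problem
\begin{equation*}
	\mathcal A W - W_{yy} = F(s,y), \qquad F(s,y):=\int_0^s f^*(\sigma,y)\, d\sigma,
\end{equation*}
with the natural conditions $W(0,y)=0$, $W(|\Omega_1|,y)$ prescribed, and $W(\cdot,0)=W(\cdot,1)=0$. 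Here $\mathcal A$ is the one-dimensional-in-$s$ nonlinear operator of the abstract, which for the prototype $\beta(t)=t^{p-1}$ reads $\mathcal A W = (n\omega_n^{1/n} s^{1/n'})^p(-W_{ss})^{p-1}$. With this in hand the theorem reduces to a comparison principle.

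\textbf{Deriving the differential inequality for $U$.} For a nonnegative $\xi\in C_c^\infty(\Omega_2)$ and $h>0$, I would test \eqref{eq:P} with $\varphi(x,y)=\xi(y)\, G_h\big((u(x,y)-t)_+\big)$, where $G_h(r)=\min(r/h,1)$, then let $h\downarrow 0$ and apply the Fleming--Rishel coarea formula on a.e.\ $y$-slice to rewrite the resulting level-set integrals in the variable $t$. Hardy--Littlewood controls the right-hand side by $F$. The isoperimetric inequality in $\mathbb R^n$ applied slicewise, Jensen's inequality (using the convexity of $A$ postulated in \eqref{eq:beta p estimates}), and the lower bound $C_1(t^p-1)\le A(t)$ convert the anisotropic $x$-gradient term into the stated weight $(n\omega_n^{1/n} s^{1/n'})^p(-U_{ss})^{p-1}$. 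By arbitrariness of $\xi$ one obtains the distributional inequality $\mathcal A U - U_{yy}\le F$ on $(0,|\Omega_1|)\times\Omega_2$; the same calculation on the already symmetric $v$ yields equality for $V$, because every inequality above becomes an identity on $u^\#$.

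\textbf{Comparison via discretisation in $y$.} Continuous comparison for the degenerate, fully nonlinear operator $\mathcal A - \partial_{yy}$ is delicate. I would partition $\Omega_2=(0,1)$ with mesh $h=1/N$ and replace $-\partial_{yy}$ by the three-point stencil, obtaining the coupled family
\begin{equation*}
	\mathcal A U_j^h + \tfrac{2 U_j^h - U_{j-1}^h - U_{j+1}^h}{h^2} \le F_j(s), \qquad j=1,\ldots,N-1, \qquad U_0^h=U_N^h=0,
\end{equation*}
of one-dimensional equations in $s$ (with the analogous equality for $V_j^h$). The T-accretivity of $\mathcal A$ in $L^\infty_s(0,|\Omega_1|)$ makes the resolvent $(I+\lambda \mathcal A)^{-1}$ order-preserving, and a Jacobi/Gauss--Seidel iteration promotes this to order-preservation for the whole coupled system. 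This yields $U_j^h\le V_j^h$ for each $j$; passing $h\downarrow 0$ by compactness and semicontinuity recovers $U\le V$, which is \eqref{aim}.

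\textbf{Where the main difficulty sits.} The decisive step is T-accretivity of $\mathcal A$ in $L^\infty_s$: for every $\lambda>0$,
\begin{equation*}
	U + \lambda\mathcal A U \le V + \lambda\mathcal A V \;\Longrightarrow\; U \le V,
\end{equation*}
in the admissible class of concave nonincreasing profiles (so that $-W_{ss}\ge 0$). Because $\mathcal A$ is fully nonlinear in $W_{ss}$ and carries the weight $s^{p/n'}$ degenerating at $s=0$, this cannot be read off from standard quasilinear maximum-principle results; it will require testing against a regularised $\sign^+(U-V)$ together with a careful analysis of the behaviour at the Neumann-type endpoint $s=0$ and the Dirichlet endpoint $s=|\Omega_1|$. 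A secondary obstacle is obtaining uniform-in-$h$ estimates --- via the energy \eqref{eq:energy} and its discrete counterpart --- strong enough to pass to the limit in the nonlinear term $\mathcal A U^h$.
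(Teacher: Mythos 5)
Your toolkit (the auxiliary operator $\mathcal A$, its T-accretivity in $L^\infty$, and a finite-difference treatment of $y$) is the same as the paper's, but you apply the two main operations in the opposite order, and that inversion creates two genuine gaps. The paper discretizes the \emph{original} problem \eqref{eq:P} in $y$ first, obtaining the system \eqref{eq:Ph} of slice problems in which no $y$ variable remains; only then does it rearrange each slice à la Talenti (this needs the Cianchi--Maz'ya regularity $a(|\nabla u_j|)\nabla u_j\in H^1(\Omega_1)$, whence the smooth-elliptic regularisation \eqref{eq:beta smooth} and a final Moreau--Yosida/$\Gamma$-convergence limit to reach \eqref{eq:beta p estimates}), producing the discrete inequality \eqref{eq:Ph star subsol} for $U_j$ and the discrete \emph{equality} \eqref{eq:Ph star} for $V_j$, and it concludes with the T-accretivity lemma plus a tridiagonal M-matrix (Cholesky) argument before letting $h\to 0$. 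You instead derive the continuous inequality $\mathcal A U-U_{yy}\le F$ first. That derivation is precisely the step the paper identifies as the obstruction and deliberately avoids: with test functions $\xi(y)\,G_h((u-t)_+)$ the $y$-term leaves you with $\int_{\{u>t\}\cap\Omega_y}u_y\,\xi'(y)$, and converting this into a distributional $-U_{yy}$ requires differentiating twice in $y$ integrals over $y$-dependent level sets (equivalently an inequality of the type $\partial^2_{yy}\int_0^s u^*(\sigma,y)\,d\sigma\ \ge\ \int_{\{u>u^*(s,y)\}}u_{yy}\,dx$), which is not justified for weak solutions of the nonlinear problem; this is why the paper says the continuous route would force one through viscosity-solution theory, and it is exactly the point that had been open for the nonlinear $x$-operator.

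Second, even granting the continuous inequality for $U$ and equality for $V$, the step ``replace $-\partial_{yy}$ by the three-point stencil'' does not yield the discrete sub/supersolution pair you claim. The second difference quotient of $W(s,\cdot)$ at $y=jh$ is the convolution of the distribution $W_{yy}(s,\cdot)$ with a nonnegative tent kernel $\rho_h$, and convolution in $y$ does not commute with the fully nonlinear operator $\mathcal A$ acting in $s$: by Jensen, $(\mathcal A V)\ast\rho_h\ \ge\ \mathcal A(V\ast\rho_h)$ when $r\mapsto\beta(-\kappa_n(s)r)$ is convex (and not even that for $1<p<2$), so the exact equation for $V$ degenerates into $\mathcal A(V\ast\rho_h)-(V\ast\rho_h)_{yy}\le F\ast\rho_h$, i.e.\ $V$ becomes a \emph{sub}solution of the discretized auxiliary problem rather than the supersolution needed; nor can you argue via a consistency error, since $U_{yy},V_{yy}$ are only distributions and the stencil error has no sign. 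Comparing two subsolutions gives nothing, so the chain breaks exactly where you rely on it. The repair is the paper's ordering: discretize \eqref{eq:P} in $y$ before rearranging, so that each $v_j$ is Schwarz-symmetric and the rearrangement identities hold exactly at the discrete level; your remaining ingredients (T-accretivity, which the paper proves by even reflection and a maximum-principle contradiction, and its promotion to the coupled system, done in the paper through the matrix $D_2$ rather than a Jacobi iteration) then function essentially as you anticipate, followed by the $h\to0$ limit via Minty's trick and the de-regularisation in $\varepsilon,\tau,\delta$, which your proposal does not address but which is needed because the slicewise regularity theory requires smooth, uniformly elliptic $\beta$.
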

Since it is  known that the rearrangement is continuous from $L^1 (\Omega) \to L^1 (\Omega_1^* \times \Omega_2)$, we preserve \eqref{aim} by using approximate problems such that the solutions converge to $u$ and $v$ at least in $L^1(\Omega)$ and $L^1 (\Omega^\#)$. \\

The structure of the proof is as follows. We consider first, in \Cref{sec:smooth elliptic case}, the case for a smooth function $\beta  \equiv \beta _{\varepsilon}$ satisfying elliptic conditions, i.e.
\begin{equation}
\tag{H$_\ee$}
\label{eq:beta smooth}
\beta \in \mathcal C^1( (0,+\infty)) \quad  \text{with} \quad \beta(0) = 0, \quad \beta' : [0,+\infty) \to  \left  [\ee, \frac 1 \ee \right ] \text{ for some } \ee > 0,  \quad \text{ and }
\quad \beta (t) t \text{ is convex}.
\end{equation}
Following more or less classical arguments, we show that the solution of \eqref{eq:P} under \eqref{eq:a general} and \eqref{eq:beta p estimates} can be approximated by problems with $ a(t)\equiv a_{\varepsilon}  (t) = \beta_{\ee}(t)/t$ and $\beta_\ee$ satisfying \eqref{eq:beta smooth}. 
\\ 

Under assumptions \eqref{eq:beta smooth} we discretize in the $y$ derivative, to obtain a family of problems 
\begin{equation}
	\tag{P$_h$}
	\label{eq:Ph}
	\begin{dcases}
		-\diver_x\Big(a(|\nabla_x u_{j}|)\nabla_x u_j\Big) - \frac{u_{j+1} - 2 u_j + u_{j-1}}{h^2} = f_j & \text{in } \  \Omega _1 ,
\\
		u_j = 0 & \text{on } \partial \Omega_1 , \quad j=1, \cdots, N , \\
		u_0 = u_{N+1} \equiv  0 & \text{in } \  \Omega_1,
	\end{dcases}
\end{equation}
where 
\begin{equation} 
	(N+1)h = 1. 
\end{equation}
We will show that we recover solutions of \eqref{eq:P} as $h \to 0$. 
Due to \eqref{eq:beta smooth} we can use $p=2$ in the study of \eqref{eq:Ph}. 
Analogously to \eqref{eq:P}, we will show in \Cref{sec:existence and uniqueness of Ph} that the solution of \eqref{eq:Ph} is a minimiser of
\begin{equation}
\label{eq:Ph energy}
J_h (\mathbf u) = \sum_{i=1}^N \int_{\Omega_1}  B(|\nabla_x u_j|) \, dx
+ \sum_{i=0}^N \int_{\Omega_1} \left(\frac{u_{j+1}-u_j}{h}\right)^2 \, dx  
- \sum_{i=1}^N \int_{\Omega_1} f_j u_j \, dx\, ,
\end{equation}
where
\begin{equation}
\mathbf u=(u_j)\in	X_N (\Omega_1) = \{ \mathbf u \in H^1_0(\Omega_1)^{N+2} : u_0 = u_{N+1} = 0  \}  = \{0_{H_0^1}\} \times H^1_0(\Omega_1)^N \times \{0_{H_0^1}\} .
\end{equation}

\noindent We will explain the construction of this energy functional, and provide an existence and uniqueness result for the case of smooth $\beta$.

\noindent  Since we want to apply rearrangement properties of smooth functions, we devote some time to the regularity of the solution of this system. In order to study regularity of solutions of each $u_j$, we can move the discrete Laplacian to the right hand side, and recover a problem of the form
\begin{equation}
\label{eq:Cianchi-Mazya problem}
\begin{dcases} 
-\diver (a(|\nabla w|) \nabla w) = g & \textrm{in } \Omega_1 , \\
w = 0 & \textrm{on } \partial \Omega_1. 
\end{dcases}
\end{equation}
Existence, uniqueness and regularity for this problem has been studied in a series of recent papers by Cianchi and Maz'ya (e.g. \cite{Cianchi2014,Cianchi2018}) under the assumption that $a$ is smooth and has some type of coercitivity
\begin{equation}
\tag{CM}
\label{eq:Cianchi}
a \in \mathcal C^1(0,+\infty), \qquad
i_a = \inf_{t>0} \frac{ t a'(t) }{ a(t) } > -1  , \qquad 	s_a = \sup_{t>0} \frac{ t a'(t) }{ a(t) }    < \infty.
\end{equation}
They show that \eqref{eq:Cianchi} is sufficient to imply \eqref{eq:a general} and \eqref{eq:beta p estimates}. 
We will apply their results for  a regularisation of $a$, and we will pass to the limit for general operators. Notice that \eqref{eq:Cianchi} holds directly for the case of the $p$-Laplace operator.\\

In this setting we will easily prove, in \Cref{sec:rearrangement of Ph}, comparison results in a very classical manner: we rearrange each equation and apply a comparison argument for the system \eqref{eq:Ph}. We define
\begin{equation*} 
U_j (s) = \int_0^s u_j^* (\sigma) d \sigma, 
\qquad 
V_j(s) = \int_0^s v_j^*(\sigma) d\sigma,
\qquad
F_j(s) = \int_0^s f_j(\sigma)d\sigma
\end{equation*}  
and, for $j \in \{1, \cdots, N\}$, we have that $U_j$ is a weak solution of 
\begin{equation}
	\label{eq:Ph star subsol}
	\tag{$\underline {\textrm{P}_h^*}$}
	\begin{dcases}
	\kappa_n(s) \beta \left (-\kappa_n(s) \,\frac{d^2 U_j}{d s^2}\right) -\frac{U_{j+1} - 2U_j + U_{j-1}}{h^2}\le F_j & \text{ in } \Omega_1^*, \\
	\frac{dU_j}{ds} (|\Omega_1|) =	U_j(0) = 0,
	\end{dcases}
\end{equation}
where
\begin{equation}
	\kappa_n(s) = n\on^{1/n}s^{1/n'}
\end{equation}
in the sense that $U_j ,\ \kappa_n(s) \,\frac{d^2 U_j}{d s^2} \in L^\infty (\Omega_1^*)$ and the equation is satisfied almost everywhere. 
We will show that $V_j$ solves the same problem, except that the above differential inequalities become  equalities (see \eqref{eq:Ph star} below). 
Due to regularity of $u_j$ that we will prove, we recover some regularity of $U_j$. This regularity is sufficient to apply accretivity results for \eqref{eq:Ph star subsol} in $L^\infty$. 
We devote \Cref{sec:comparison principle} to prove this result, from which we deduce $U_j \le V_j$ for every $j$. 
This is precisely the mass comparison we sought, at least for \eqref{eq:Ph}. We devote \Cref{sec:convergence Ph to P} to showing that we can pass to the limit as $h \to 0$, and recover solutions of the original problem. In \Cref{sec:comparison in smooth case}, we pass to the limit as $h \to 0$ in the comparison, thus proving the comparison under assumptions \eqref{eq:beta smooth}. Then, in \Cref{sec:proof of comparison general} we use an approximation argument to prove the main result in the general setting.

\section{Smooth elliptic case}
\label{sec:smooth elliptic case}

In this part, we assume \eqref{eq:beta smooth}.
This immediately implies that $\ee t \le  \beta(t) \le t / \varepsilon$, so \eqref{eq:beta p estimates} holds with $p = 2$. 
The aim of this section is to prove
\begin{theorem}\label{thm:main smooth}
	Let $a$ satisfy \eqref{eq:a general} and \eqref{eq:beta smooth}, $0\le f \in \mathcal C^\infty_c (\Omega)$ and let $u \in H_0^1 (\Omega)$ be the weak solution of the  problem \eqref{eq:P} and $v\in H_0^1  (\Omega^\#)$ be the weak solution of the symmetrised problem \eqref{eq:P sharp}.
	Then, we have the following mass comparison:
	\begin{equation}
	\int_0^s u^*(\sigma,y)ds\le \int_0^s v^*(\sigma,y)dy\,, \qquad \mbox{for all } s\in [0,|\Omega_1|] \mbox{ and for a.e. } y\in\Omega_2\,.
	\end{equation}
\end{theorem}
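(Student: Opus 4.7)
The plan is to carry out the partial-discretization programme announced above the statement. Since \eqref{eq:beta smooth} reduces \eqref{eq:P} to a uniformly elliptic problem with $p=2$, I would discretize only the $y$-derivative via the finite-difference scheme \eqref{eq:Ph}, trading the full PDE for a coupled family of $N$ quasilinear elliptic problems posed on $\Omega_1$ and linked through a discrete Laplacian in the index $j$. Existence and uniqueness of a minimizer $\mathbf{u}^h\in X_N(\Omega_1)$ of $J_h$ follow from strict convexity of $J_h$ (the discrete finite-difference part is strictly convex in $\mathbf{u}$ and $B$ is convex because $\beta$ is non-decreasing). On $\Omega_1^\star$ with data $f_j^\#$ I would construct the analogous minimizer $\mathbf{v}^h$; rearrangement invariance of the $y$-discrete energy together with strict convexity forces each $v_j^h$ to be radial and radially non-increasing in $x$.

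Next, I would derive the rearranged inequality \eqref{eq:Ph star subsol} for $U_j^h(s)=\int_0^s (u_j^h)^*(\sigma)\,d\sigma$. Cianchi--Maz'ya regularity applied to the $j$-th equation in \eqref{eq:Ph}, with the discrete Laplacian terms absorbed into the right-hand side as a datum in $L^\infty\cap L^2$, gives enough smoothness of $u_j^h$ to run the classical Talenti computation: integrate on the superlevel set $\{u_j^h>t\}$, apply the coarea formula and the Euclidean isoperimetric inequality on $\partial\{u_j^h>t\}$, and use convexity of $A(t)=t\beta(t)$ via Jensen's inequality to reproduce the term $\kappa_n(s)\beta(-\kappa_n(s)(U_j^h)'')$ after differentiating in $t$. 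Applying Hardy--Littlewood to the coupling terms, $\int_{\{u_j^h>t\}} u_{j\pm1}^h\, dx\le \int_0^{\mu_{u_j^h}(t)}(u_{j\pm1}^h)^*(\sigma)\,d\sigma$, converts the discrete Laplacian in $y$ acting on $u_j^h$ into a discrete Laplacian acting on the $U_k^h$'s, producing \eqref{eq:Ph star subsol}. The same calculation for $\mathbf{v}^h$ produces equalities by radial symmetry.

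The heart of the matter, and the step I expect to be the main obstacle, is the comparison $U_j^h\le V_j^h$ for every $j$. A scalar pointwise comparison cannot be run equation by equation because the inequality for $U_j$ and the equality for $V_j$ are coupled through $j\pm 1$. The approach I would follow is the $L^\infty$ $T$-accretivity of the discrete vector operator $\mathbf{U}\mapsto\bigl(\kappa_n(s)\beta(-\kappa_n(s)U_j'')-h^{-2}(U_{j+1}-2U_j+U_{j-1})\bigr)_j$ promised in \Cref{sec:comparison principle}: at an index--point $(j_0,s_0)$ realizing $\max_j\sup_s (U_j^h-V_j^h)^+(s)$, the discrete maximum principle makes the coupling term act with a favourable sign, while monotonicity of $\beta$ combined with $(U_{j_0}^h-V_{j_0}^h)''(s_0)\le 0$ forces the nonlinear term to do likewise, and since the right-hand sides coincide a positive maximum is ruled out. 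The delicate point is that the nonlinearity appears composed with the second derivative, so Kato-type sign test functions have to be used carefully at points where $U_j^h$ fails to be smooth; this is precisely what the $T$-accretivity framework should handle cleanly.

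Finally, I would pass to the limit $h\to 0$ on the piecewise-affine interpolants $\tilde u^h,\tilde v^h$. Uniform energy bounds from $J_h(\mathbf{u}^h)\le J_h(\mathbf{0})$ yield weak compactness in $X^p(\Omega)$; strict convexity of the limit energy and uniqueness of the weak solution of \eqref{eq:P} upgrade this to strong $L^1$ convergence of $\tilde u^h$ to $u$, and similarly for $\tilde v^h$ to $v$. Continuity of the codimension-$n$ rearrangement as a map $L^1(\Omega)\to L^1(\Omega_1^\star\times\Omega_2)$ transfers the discrete comparison $U_j^h\le V_j^h$ into the desired inequality $\int_0^s u^*(\sigma,y)\,d\sigma\le\int_0^s v^*(\sigma,y)\,d\sigma$ for all $s\in[0,|\Omega_1|]$ and a.e.\ $y\in\Omega_2$, completing the proof.
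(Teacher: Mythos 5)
Your overall architecture coincides with the paper's: discretize in $y$, solve \eqref{eq:Ph}, rearrange each equation (Talenti for the divergence term, Hardy--Littlewood for the coupling and the datum) to obtain \eqref{eq:Ph star subsol} and \eqref{eq:Ph star}, compare $U_j\le V_j$, pass to the limit $h\to 0$, and transfer the inequality through the $L^1$-continuity of the rearrangement. The genuine gap is in the step you yourself call the heart of the matter. Your comparison argument evaluates the coupled inequality at a point $(j_0,s_0)$ realizing $\max_j\sup_s (U_j-V_j)_+$ and invokes $(U_{j_0}-V_{j_0})''(s_0)\le 0$. But $\kappa_n(s)\,U_j''$ is only in $L^\infty(\Omega_1^*)$, so $U_j-V_j$ is merely $\mathcal C^1$ with a second derivative defined a.e.; neither the differential inequality nor the sign of the second derivative can be read off at a single point (this is exactly the ``show that $U$ is a viscosity solution'' difficulty the paper's discretization is designed to avoid, and in addition the maximum may sit at $s=|\Omega_1|$, where only the Neumann condition $U_j'(|\Omega_1|)=0$ is available). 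The rigorous substitute --- working on a near-maximum set $\{(U_{j_0}-V_{j_0})\ge M-\mu\}$ --- destroys the favourable sign of the coupling: on that set one only gets $(U_{j_0+1}-V_{j_0+1})-2(U_{j_0}-V_{j_0})+(U_{j_0-1}-V_{j_0-1})\le 2\mu$, i.e.\ an error of size $2\mu/h^2$ on the right-hand side, which the maximum principle does not absorb (the resulting inequality $M\le M-\mu+C\mu$ with $C$ of order $h^{-2}$ yields no contradiction). Deferring to ``the $T$-accretivity framework'' does not close this, because what you would need is $T$-accretivity in $L^\infty$ of the full coupled vector operator, which you neither formulate nor prove.

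The paper resolves this by a two-step decoupling that is absent from your sketch: first, \Cref{lem:A is T accretive} establishes $T$-accretivity in $L^\infty$ of the scalar operator $AU=\kappa_n(s)\beta(-\kappa_n(s)U_{ss})$ alone (proved by an even reflection across $s=|\Omega_1|$, which converts the Neumann condition into a Dirichlet one, followed by the near-max-set maximum principle, where no coupling error appears); applying it with $\lambda=h^2/2$ to the difference of \eqref{eq:Ph star subsol} and \eqref{eq:Ph star} turns the coupled system into scalar inequalities among the numbers $x_j=\|(U_j-V_j)_+\|_{L^\infty}$, namely $D_2 x\le 0$ for the tridiagonal matrix $D_2$, and a purely algebraic step (Cholesky $D_2=C^tC$, hence $0\le\|Cx\|^2=x^tD_2x\le0$, so $x=0$) gives \Cref{prop:Ph star comparison}. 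If you restructure your Step 3 along these lines the rest of your plan is sound, with two smaller caveats: the rearrangement inequality is not obtained by running the smooth Talenti computation directly on $u_j$ (which is only $W^{1,\infty}$ with $a(|\nabla u_j|)\nabla u_j\in H^1$), but via the approximation and convexity/weak-lower-semicontinuity argument of \Cref{lem:PS}; and in the limit $h\to0$ the identification of the weak limit as the solution of \eqref{eq:P} requires passing to the limit in the nonlinearity, which the paper does by Minty's trick --- ``strict convexity plus uniqueness'' alone does not deliver it.
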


Until \Cref{sec:convergence Ph to P}, variable $y$ is not present, and so we denote $\nabla_x$ simply by $\nabla$.

\subsection{Existence, uniqueness and regularity of solutions of the discrete problem \eqref{eq:Ph}}
\label{sec:existence and uniqueness of Ph}
We say that a function $\mathbf u\in X_N(\Omega_1) $ is a {\it weak solution} of \eqref{eq:Ph} if
\begin{align*}
\int_{\Omega_1} a(\nabla u_j  )\nabla u_j& \cdot \nabla \varphi_j- \int_{\Omega_1} \frac{u_{j+1} - 2u_{j} + u_{j-1}}{h^2} \varphi_j = \int_{\Omega}f_j\varphi_j, \quad \forall j \in \{1,\cdots, N\} \ \ \forall \bm \varphi \in X_N(\Omega_1).
\end{align*}

Notice that, for $\mathbf u, \bm \varphi \in X_N(\Omega_1)$ we have that
\begin{equation}
	\label{eq:discrete u_yy}
	\sum_{j=1}^N \frac{-u_{j+1} + 2 u_{j} - u_{j-1}}{h^2} \varphi_j = \sum_{j=0}^N \frac{u_{j+1} - u_{j}}{h}\frac{\varphi_{j+1} - \varphi_{j}}{h} = \sum_{j=1}^N u_j \frac{-\varphi_{j+1} + 2 \varphi_{j} - \varphi_{j-1}}{h^2} .
\end{equation}
Hence, it is easy to see that we can write equivalently the weak formulations
\begin{equation}
	\label{eq:Ph weak vector}
	\int_{\Omega_1} \sum_{j=1}^N a(\nabla u_j  )\nabla u_j \cdot \nabla \varphi_j 
	+ \int_{\Omega_1}  \sum_{j=0}^N \frac{u_{j+1} - u_{j}}{h}\frac{\varphi_{j+1} - \varphi_{j}}{h} 
	= \int_{\Omega_1} \sum_{j=1}^N f_j  \varphi_j,
\end{equation}
and
\begin{equation}
		\label{eq:Ph very weak vector}
	\int_{\Omega_1} \sum_{j=1}^N a(\nabla u_j  )\nabla u_j \cdot \nabla \varphi_j 
	+  \int_{\Omega_1}  \sum_{j=1}^N u_j \frac{-\varphi_{j+1} + 2 \varphi_{j} - \varphi_{j-1}}{h^2} 
	= \int_{\Omega_1} \sum_{j=1}^N f_j  \varphi_j.
\end{equation}

\begin{proposition}
	\label{prop:existence of Ph}
	Assume \eqref{eq:beta smooth}, and let $\mathbf f = (f_j)  \in L^2 (\Omega)^N $ where $ f_j \ge 0 $. Then, there exists a unique $\mathbf u = (u_j) \in X_N(\Omega_1)$ where $u_j\ge 0$ satisfy \eqref{eq:Ph weak vector}. It also satisfies \eqref{eq:Ph very weak vector} and is the global minimiser in $X_N(\Omega_1)$ of $J_h$ given by \eqref{eq:Ph energy}.
\end{proposition}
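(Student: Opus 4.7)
The plan is the direct method of the calculus of variations applied to $J_h$ on the Hilbert space $X_N(\Omega_1) \simeq H^1_0(\Omega_1)^N$. Under \eqref{eq:beta smooth}, $\beta'\ge \varepsilon > 0$ gives $B(t)\ge \tfrac{\varepsilon}{2} t^2$, so
\[
J_h(\mathbf{u}) \ge \tfrac{\varepsilon}{2} \sum_{j=1}^N \|\nabla u_j\|_{L^2(\Omega_1)}^2 - \sum_{j=1}^N \int_{\Omega_1} f_j u_j,
\]
which, combined with Cauchy-Schwarz and Poincaré on the linear term, yields coercivity. For strict convexity, I would decompose $J_h$ as the sum of the convex functional $\sum_j\int (B(|\nabla u_j|) - \tfrac{\varepsilon}{2}|\nabla u_j|^2)\,dx$, the non-negative quadratic discrete term, the linear term, and the strongly convex piece $\tfrac{\varepsilon}{2}\sum_j\|\nabla u_j\|_{L^2}^2$; convexity of $\xi\mapsto B(|\xi|)$ follows from convexity and monotonicity of $B$. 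Weak lower semicontinuity is standard: the gradient integrand is convex so Fatou applies, while the quadratic discrete term and linear term are weakly continuous on $L^2$. The direct method then produces a unique minimiser $\mathbf{u}\in X_N(\Omega_1)$.

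To identify the Euler-Lagrange system, I would compute $\left.\tfrac{d}{dt} J_h(\mathbf{u}+t\bm{\varphi})\right|_{t=0}=0$ for arbitrary $\bm\varphi\in X_N(\Omega_1)$; using $B'(|\xi|)\tfrac{\xi}{|\xi|} = \beta(|\xi|)\tfrac{\xi}{|\xi|} = a(|\xi|)\xi$ one recovers exactly \eqref{eq:Ph weak vector}. The equivalence with \eqref{eq:Ph very weak vector} is the purely algebraic summation-by-parts identity \eqref{eq:discrete u_yy}, which rests on the Dirichlet boundary conditions $u_0 = u_{N+1} = \varphi_0 = \varphi_{N+1} = 0$.

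For non-negativity I would apply a standard truncation argument. The tuple $\mathbf{u}^+ := (u_j^+)$ lies in $X_N(\Omega_1)$, since the positive-part operation preserves $H^1_0(\Omega_1)$ and the Dirichlet ends $u_0=u_{N+1}=0$. Three pointwise inequalities then give $J_h(\mathbf{u}^+)\le J_h(\mathbf{u})$: first, $|\nabla u_j^+|\le |\nabla u_j|$ together with monotonicity of $B$ controls the gradient term; second, the $1$-Lipschitz property of $r\mapsto r^+$ gives $(u_{j+1}^+-u_j^+)^2\le (u_{j+1}-u_j)^2$ for the discrete term; third, $f_j\ge 0$ and $u_j^+\ge u_j$ handle the linear term. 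Uniqueness of the minimiser then forces $\mathbf{u}=\mathbf{u}^+\ge 0$.

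The main, rather mild, obstacle is bookkeeping: one must consistently carry the discrete Dirichlet ends $u_0=u_{N+1}=0$ through the summation-by-parts identity and check that all operations (differentiation of $J_h$ in each component, passage to the positive part) preserve membership in $X_N(\Omega_1)$. Both are transparent once written out, and the argument above closes the proof.
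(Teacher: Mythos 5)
Your proposal is correct and follows essentially the same variational route as the paper: coercivity and strict convexity of $J_h$ from $\beta'\ge\varepsilon$, the direct method for existence and uniqueness of the minimiser, and the Euler--Lagrange computation together with the summation-by-parts identity \eqref{eq:discrete u_yy} to recover \eqref{eq:Ph weak vector} and \eqref{eq:Ph very weak vector}. The only (minor) divergence is the nonnegativity step, where you compare $J_h(\mathbf u^+)$ with $J_h(\mathbf u)$ and invoke uniqueness of the minimiser, whereas the paper tests the weak formulation with $\varphi_j=(u_j)_-$; both arguments are standard and equally valid.
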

\begin{proof} Since $B'' = \beta' \ge  \ee$, 
	$B$ is strictly convex, quadratic and bounded from below. Hence $J_h$ has a unique minimiser. Applying \eqref{eq:discrete u_yy} and reproducing the proof we deduce that the Euler-Lagrange equations for $J_h$ are precisely \eqref{eq:Ph}. To check that $u_j \ne 0$ we use $\varphi_j = (u_j)_-$ as a test function, to deduce $u_- = 0$. 
\end{proof}

One of the advantages of having discretised in $y$, is that for every $j$ we move the right hand side
\begin{equation*}
		-\diver \Big( a( |\nabla u_j| ) |\nabla u_j| \Big) = F_j = f_j + \frac{u_{j+1} - 2u_j + u_{j-1}}{h} \qquad \textrm{ in } \Omega_1.
\end{equation*}
This a system of equations. It is called diagonal since the $j$-th equation only includes the gradient of $u_j$. The coupling $F_j$ is linear in $\mathbf u$. It is proven in \cite[Theorem 2]{Meier1982} that, if $\mathbf f \in L^\infty(\Omega_1)^{N}$ then $\mathbf u \in L^\infty(\Omega_1)^N$.\\

From the series of papers by Cianchi and Maz'ya we recover some regularity results. In particular, in \cite{Cianchi2014} the authors prove that, 
	for the solution $w$ of \eqref{eq:Cianchi-Mazya problem}, we have 
\begin{equation}
	 \| \nabla w \|_{L^\infty (\Omega_1) } \le C \beta^{-1} \left(  \| g \|_{L^{n,1}(\Omega_1) }  \right),
\end{equation}
and in \cite{Cianchi2018} they prove 
\begin{equation}
	a(|\nabla w|)  \nabla w  \in W^{1,2}(\Omega_1) \iff g \in L^2 (\Omega_1). 
\end{equation}
Notice that
\begin{equation*}
	\frac{ta'(t)}{a(t)} = \frac{t \beta'(t)}{\beta (t)} - 1.
\end{equation*}
Since $\ee \le \beta' \le \frac 1 \ee$ and $\beta (0) = 0$,  we have that $\ee t \le \beta(t) \le t/\ee$ and hence $1 \le \frac{t \beta'(t)}{\beta} \le \frac 1 {\ee^2}$. Therefore, \eqref{eq:beta smooth} implies \eqref{eq:Cianchi}.
Applying these two results, and the fact that $u_j \in L^2(\Omega_1)$ by the minimisation argument, we have
\begin{theorem}
	Let $\mathbf f \in L^\infty(\Omega_1)^N$. Then, the unique weak solution of \eqref{eq:Ph} is in $ W^{1,\infty}_0(\Omega_1)^{N+2}$ and
	\begin{equation}
		\label{eq:regularity a(nabla u) nabla u}
		a(|\nabla u_j |) \nabla u_j \in H^1  (\Omega_1).	
	\end{equation}
\end{theorem}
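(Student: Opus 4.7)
The plan is to decouple the system \eqref{eq:Ph} into $N$ independent scalar problems, each of the form \eqref{eq:Cianchi-Mazya problem}, and then invoke the two Cianchi--Maz'ya theorems cited immediately before the statement. For each $j\in\{1,\ldots,N\}$, moving the discrete Laplacian to the right-hand side recasts the $j$-th equation as
\begin{equation*}
-\diver\bigl(a(|\nabla u_j|)\nabla u_j\bigr) = g_j, \qquad g_j := f_j + \frac{u_{j+1} - 2u_j + u_{j-1}}{h^2},
\end{equation*}
with homogeneous Dirichlet data; this is exactly the scalar Cianchi--Maz'ya problem \eqref{eq:Cianchi-Mazya problem}, and the strategy is to treat it as such, one index $j$ at a time.

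First I would use the diagonal structure of the system and the linearity of the coupling to obtain the preliminary bound $\mathbf u\in L^\infty(\Omega_1)^N$ from the Meier result \cite{Meier1982} quoted above the theorem. Together with $\mathbf f \in L^\infty(\Omega_1)^N$ and the boundedness of $\Omega_1$, this gives $g_j \in L^\infty(\Omega_1)$ for every $j$, hence $g_j \in L^{n,1}(\Omega_1)\cap L^2(\Omega_1)$ via the trivial inclusion $L^\infty(\Omega_1) \hookrightarrow L^{n,1}(\Omega_1) \cap L^2(\Omega_1)$ on bounded domains. Next, I would verify that \eqref{eq:Cianchi} is satisfied under \eqref{eq:beta smooth}: this is already done in the paragraph preceding the statement, since $a(t)=\beta(t)/t \in \mathcal C^1(0,+\infty)$ and the bounds $\varepsilon \le \beta'(t) \le 1/\varepsilon$, $\varepsilon t \le \beta(t) \le t/\varepsilon$ give
\begin{equation*}
\frac{ta'(t)}{a(t)} \;=\; \frac{t\beta'(t)}{\beta(t)} - 1 \;\in\; \left[\,0,\ \tfrac{1}{\varepsilon^{2}}-1\,\right],
\end{equation*}
so both $i_a$ and $s_a$ are finite and $i_a>-1$.

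With these ingredients in hand, the conclusion is a direct quotation of the two Cianchi--Maz'ya results. The $L^{n,1}$ gradient bound of \cite{Cianchi2014} applied to the $j$-th decoupled equation yields $\|\nabla u_j\|_{L^\infty(\Omega_1)} \le C\beta^{-1}\bigl(\|g_j\|_{L^{n,1}(\Omega_1)}\bigr)$, so $u_j \in W^{1,\infty}_0(\Omega_1)$ for each $j\in\{1,\ldots,N\}$; the boundary entries $u_0=u_{N+1}\equiv 0$ trivially belong to $W^{1,\infty}_0(\Omega_1)$, giving $\mathbf u \in W^{1,\infty}_0(\Omega_1)^{N+2}$. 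The second estimate of \cite{Cianchi2018}, applied with the same $g_j \in L^2(\Omega_1)$, then produces \eqref{eq:regularity a(nabla u) nabla u}. The only substantive difficulty is the initial $L^\infty$ bound on $\mathbf u$, since one needs to absorb the linear coupling among the $u_k$'s before the scalar Cianchi--Maz'ya machinery becomes available; an alternative self-contained route would be a Stampacchia-type iteration against the full vector weak formulation \eqref{eq:Ph weak vector} using the monotonicity of $\beta$, but invoking \cite{Meier1982} is the most economical option.
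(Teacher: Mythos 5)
Your proposal is correct and follows essentially the same route as the paper: the paper likewise decouples the system by moving the discrete Laplacian to the right-hand side, uses the Meier $L^\infty$ bound for the coupled system to get $g_j\in L^\infty(\Omega_1)\subset L^{n,1}(\Omega_1)\cap L^2(\Omega_1)$, verifies that \eqref{eq:beta smooth} implies \eqref{eq:Cianchi}, and then quotes the two Cianchi--Maz'ya results to conclude $\mathbf u\in W^{1,\infty}_0(\Omega_1)^{N+2}$ and \eqref{eq:regularity a(nabla u) nabla u}. No gaps to report.
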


\subsection{Rearrangement of \eqref{eq:Ph} to a problem \eqref{eq:Ph star}}
\label{sec:rearrangement of Ph}
Our aim is to compare \eqref{eq:Ph} with its rearranged problem:
\begin{equation}
	\tag{P$_h^\#$}
	\label{eq:Ph sharp}
	\begin{dcases}
		-\diver_x\Big(a(|\nabla v_{j}|)\nabla v_j\Big) - \frac{v_{j+1} - 2 v_j + v_{j-1}}{h^2} = f_j^\star & \text{in } \Omega_1^\star, \\
		v_j = 0 & \text{on } \partial \Omega_1^\star , \ j= 1, \cdots, N ,\\
		v_0 = v_{N+1} = 0 & \text{in } \Omega_1^\star .
\end{dcases}
\end{equation}
Arguing as before, it has a unique solution $\mathbf v \in X_N(\Omega_1^\star)$. 
For every  $q\in [1,\infty]$ we denote by $q':=\frac q {q-1}$ its conjugate exponent.
\begin{proposition}
	\label{prop:Ph rearrangement} 
	Let $\mathbf f \in \mathcal C_c (\Omega_1)^N$ and let $\mathbf u \in X_N(\Omega_1)$ and $\mathbf v \in X_N(\Omega_1^\star)$ be the unique solutions of \eqref{eq:Ph} and \eqref{eq:Ph sharp} respectively.
	Define, for every $j \in \{0, \cdots, N+1\}$ 
	\begin{align*}
	U_j(s)=\int_0^s u_j^*(\sigma)\,d\sigma,
	\qquad 
	V_j(s)=\int_0^s v_j^*(\sigma)\,d\sigma,
	\qquad  	
	F_j(s) = \int_0^s f_j^*(\sigma)d\sigma.	
	\end{align*}
	Then, for every $j \in \{1, \cdots, N\}$, $U_j$ and $V_j$ are in $\cC(\overline {\Omega_1^*})$ and satisfy
	\begin{align}
		\label{eq:integrability of U_{ss}}
		\kappa_n(s) \,\frac{d^2 U_j}{d s^2},\quad & \kappa_n(s)  \,\frac{d^2 V_j}{d s^2} \in L^\infty(\Omega_1^*). 
	\end{align} 
	Moreover $\mathbf U = (U_j)$ is a solution of \eqref{eq:Ph star subsol} and $\mathbf V = (V_j)$ is a solution of
	\begin{equation}
	\tag{P$_h^*$}
	\label{eq:Ph star}
	\begin{dcases}
	\kappa_n(s)  \beta \left (-\kappa_n(s)  \,\frac{d^2 V_j}{d s^2}\right) -\frac{V_{j+1} - 2V_j + V_{j-1}}{h^2}= F_j & \text{in } \Omega_1^*, \\
	\frac{dV_j}{ds} (|\Omega_1|) =	V_j(0) = 0 .
	\end{dcases}
	\end{equation}
	Also, $U_0 = U_{N+1} = V_0 = V_{N+1} = 0$. 
\end{proposition}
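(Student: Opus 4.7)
My approach is to treat each scalar equation of the system \eqref{eq:Ph} separately and apply Talenti's classical rearrangement machinery to it, placing the discrete $y$-Laplacian inside the right-hand side. Fix $j\in\{1,\dots,N\}$ and set $g_j := f_j + h^{-2}(u_{j+1}-2u_j+u_{j-1})$, which belongs to $L^\infty(\Omega_1)$ by the regularity theorem of the previous subsection, so that $u_j$ solves $-\diver(a(|\nabla u_j|)\nabla u_j)=g_j$ in $\Omega_1$ with $u_j=0$ on $\partial\Omega_1$. Testing this equation with a Lipschitz truncation approximating $\chi_{\{u_j>t\}}$ and invoking the coarea formula yields the level-set identity
\[
\int_{\{u_j=t\}}\beta(|\nabla u_j|)\,d\mathcal H^{n-1}=\int_{\{u_j>t\}}g_j\,dx.
\]
Combining Cauchy--Schwarz on $\{u_j=t\}$, the identity $-\mu_j'(t)=\int_{\{u_j=t\}}|\nabla u_j|^{-1}\,d\mathcal H^{n-1}$, the isoperimetric inequality $P(\{u_j>t\})\ge\kappa_n(\mu_j(t))$, and the monotonicity of $\beta$, the change of variables $s=\mu_j(t)$ produces the classical Talenti inequality
\[
\kappa_n(s)\,\beta\bigl(-\kappa_n(s)\,U_j''(s)\bigr)\le \int_{\{u_j>u_j^*(s)\}} g_j\,dx\qquad\text{for a.e.\ }s\in(0,|\Omega_1|).
\]

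Next I would rearrange the right-hand side term by term via the Hardy--Littlewood inequality. For the data and the neighbours one gets $\int_{\{u_j>u_j^*(s)\}} f_j\le F_j(s)$ and $\int_{\{u_j>u_j^*(s)\}} u_{j\pm 1}\le U_{j\pm 1}(s)$, while for $u_j$ itself Cavalieri's principle gives the equality $\int_{\{u_j>u_j^*(s)\}} u_j=U_j(s)$. Keeping track of signs, this yields
\[
\int_{\{u_j>u_j^*(s)\}} g_j\le F_j(s)+\frac{U_{j+1}(s)-2U_j(s)+U_{j-1}(s)}{h^2},
\]
which combined with the previous inequality is exactly \eqref{eq:Ph star subsol}. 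The boundary identities $U_j(0)=0$ and $U_j'(|\Omega_1|)=u_j^*(|\Omega_1|)=0$ are immediate from the definitions and from $u_j=0$ on $\partial\Omega_1$. To obtain the equality version \eqref{eq:Ph star} for $V_j$, I would first show that the minimiser $\mathbf v$ of the discrete energy on $\Omega_1^\star$ has each component $v_j$ radially nonincreasing in $x$: this follows because replacing $\mathbf v$ componentwise by its Schwarz rearrangement does not increase $\int B(|\nabla v_j|)$ (Pólya--Szegő), does not decrease $\int f_j^\star v_j$ (Hardy--Littlewood), and does not increase the coupling $h^{-2}\int(v_{j+1}-v_j)^2$ (nonexpansivity of the Schwarz rearrangement in $L^2$); strict convexity of $J_h$ then forces $v_j$ to be radially decreasing. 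With this at hand the Cauchy--Schwarz, isoperimetric and Hardy--Littlewood steps all become equalities, and the same computation reproduces \eqref{eq:Ph star}.

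For the regularity claims, continuity of $U_j$ and $V_j$ on $\overline{\Omega_1^\star}$ follows from $u_j,v_j\in L^\infty$, which makes $U_j,V_j$ Lipschitz. The weighted bound $\kappa_n U_j''\in L^\infty$ comes again from the Cauchy--Schwarz--coarea identity on a single level set: under the $W^{1,\infty}$ regularity of the previous subsection one obtains the pointwise estimate
\[
\kappa_n(s)\,|U_j''(s)|=\kappa_n(s)\,|(u_j^*)'(s)|\le \|\nabla u_j\|_{L^\infty(\Omega_1)},
\]
and likewise for $V_j$. The main obstacle I expect is the rigorous a.e.\ passage from integrated level-set identities in $t$ to pointwise inequalities in $s$: this requires that $u_j$ have no level sets of positive measure (so that $\mu_j$ is strictly decreasing and $u_j^*$ is absolutely continuous) and that $|\nabla u_j|>0$ on a.e.\ level set. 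Both are consequences of the $W^{1,\infty}$ regularity together with $a(|\nabla u_j|)\nabla u_j\in H^1$ established earlier, which is precisely why those results were proved before the present proposition.
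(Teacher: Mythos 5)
Your overall scheme is the same as the paper's: work equation by equation in the discrete system, integrate over the level sets of $u_j$, use a Talenti-type inequality for the divergence term and Hardy--Littlewood/Cavalieri for the coupling and the datum, and turn all inequalities into equalities for the radial problem. The one place where you genuinely add something is the radial symmetry of $\mathbf v$: the paper simply asserts $v_j=v_j^\#$, whereas your energy argument (P\'olya--Szeg\H{o} for $\int B(|\nabla v_j|)$, Hardy--Littlewood for $\int f_j^\star v_j$, non-expansivity of the rearrangement for the cross terms $\int(v_{j+1}-v_j)^2$, then uniqueness of the minimiser of $J_h$) is a correct and welcome justification of that step.

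The gap is in how you dispose of what you yourself call the main obstacle. You claim that ``$u_j$ has no level sets of positive measure'' and ``$|\nabla u_j|>0$ on a.e.\ level set'' are \emph{consequences of} $u_j\in W^{1,\infty}$ and $a(|\nabla u_j|)\nabla u_j\in H^1(\Omega_1)$. They are not: regularity never rules out critical points or flat zones ($\nabla u_j$ vanishes at interior maxima, and since $g_j=f_j+h^{-2}(u_{j+1}-2u_j+u_{j-1})$ may change sign, not even a strong maximum principle argument excludes plateaus at positive levels). For the same reason your identity $-\mu_j'(t)=\int_{\{u_j=t\}}|\nabla u_j|^{-1}\,d\HH^{n-1}$ is in general only an inequality, the critical set being the culprit. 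Without those facts, the passage from the level-set identities in $t$ to the pointwise differential inequality in $s$ (and the use of $\int_{\{u_j>u_j^*(s)\}}u_j=U_j(s)$) is exactly the delicate point, and your proposal leaves it unproved. The paper circumvents it differently: Lemma~\ref{lem:PS} is first proved for $u\in\cC^\infty_c$ and then extended by an approximation argument (weak-$*$ convergence of $\kappa_n\,du_k^*/ds$, convexity and lower semicontinuity of $A$, a.e.\ convergence of the characteristic functions of level sets), and the degenerate set $\{du^*/ds=0\}$ is treated separately, where the inequality is trivial because $\beta(0)=0$ and the divergence integral is shown to be nonnegative. To complete your route you would either have to import that approximation lemma, or redo the classical Talenti derivation in a way that is insensitive to the critical set and to flat zones, rather than assert that they are absent.
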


Before we proceed to the proof, we recall some classical results of rearrangement theory. 

\subsubsection{Some Schwarz rearrangement results}

For the discrete problem there is no $y$ variable, and so we can apply standard results from Schwarz rearrangement. Consider $u : \Omega \to \RR$ non-negative. We define the Schwarz rearrangement
\begin{equation}
	u^\star(x) = u^* (\on |x|^n), \qquad \text{for } x \in \Omega^\star.
\end{equation}

The relation between $u^*$ and $\mu$ is the following
\begin{equation*}
	\mu(u^*(s)) = |\{x \in \Omega : u(x) > u^*(s) \}| \le s\le |\{x \in \Omega : u(x) \ge u^*(s) \}| =  \mu(u^*(s)^-)
\end{equation*}
and equalities hold if and only if $\mu$ is continuous or, equivalently, if $u^*$ has no flat zone. Since $\mu$ is monotone, the set of discontinuities is, at most, countable, hence has measure zero.

The rearrangement of $u$ is constructed so that, 
for any $A \subset \Omega$,
\begin{align*}
	\int_{A} u (x) dx &\le \int_0^{|A|} u^* (\sigma) d\sigma, 
\end{align*} 
and, for a.e. $s \in \Omega^*$
\begin{align*} 
	\int_{u > u^*(s) } u \, dx &= \int_0^{s} u^* (\sigma) d\sigma .
\end{align*}
It is well known that if $u\in W_0^{1,p}(\Omega)$, for some $1\le p\le \infty$, then also $u^\star \in W_0^{1,p}(\Omega^\star)$, and, by the classical P\'olya-Szeg\"o inequality, the $L^p$ norm is preserved while the $W^{1,p}$ norm is reduced (see for example \cite{Baernstein1976,Brock2000,Brock1995,Burchard1997} and  the references therein), in the sense that
\begin{equation}\label{PS}
\int_{\Omega} a(|\nabla u|) |\nabla u|^2 dx \ge \int_{\Omega^\star}a(|\nabla u^\star|) |\nabla u^\star|^2 dx\,.
\end{equation}
The reader may find a discussion on extremals of this kind of inequality in \cite{Burchard2015}.
Inequality \eqref{PS} is a consequence of the classical co-area formula and  of the following  inequalities (see, for example, \cite{Talenti1977})
\begin{equation}\label{PSlevelx}
\int_{u(\cdot)=t}a(|\nabla u|) |\nabla u|\,d\HH^{n-1}\ge\int_{u^\star(\cdot)=t}a(|\nabla u^\star|) |\nabla u^\star|\, d\HH^{n-1} , \qquad \forall t > 0.
\end{equation}
By definition, we easily deduce that
\begin{equation}\label{unigradx}
|\nabla u^\star(x)|=\left [  \kappa_n(s) \left (-\frac{d u^*}{d s}(s,y)\right ) \right ] \Bigg| _{ s=\on |x|^n} \qquad \mbox{ for a.e. } x\in\Omega^\star,
\end{equation}
and then \eqref{PSlevelx} becomes
\begin{align}
\int_{u(\cdot)=t}a(|\nabla u|)&|\nabla u|\,d\HH^{n-1} \notag\\
&\ge a(|\nabla u^\star|)|\nabla u^\star|\HH^{n-1}\left (\left  \{x: u^\star(x)=t\right \}\right )\notag\\
&=\beta(|\nabla u^\star|) |\HH^{n-1}\left (\left  \{x: u^\star(x)=t\right \}\right )\notag\\
&= \kappa_n(s)\beta  \left (- \kappa_n(s)\frac{d u^*}{d s}(s,y)\right )\Bigg |_{s=\mu(t)} .
\label{PSlevx}
\end{align}
\begin{lemma}
	\label{lem:PS}
	Let $u \in W^{1,\infty} (\Omega_1)$. Then, $u^*$ is differentiable a.e. and
	$$
		0 \le -  \kappa_n(s) \frac{du^*}{ds} \in L^\infty(\Omega_1^*).
	$$
	If, furthermore, $a(|\nabla u|) \nabla u \in H^1 (\Omega_1)$ then, for a.e. $s \in \Omega_1^*$, we have
	\begin{align}
	\label{eq:rearrangement div inequality}
	-\int_{u>u^*(s)} \diver\left(a(|\nabla u|)\nabla u\right)\,dx
	\ge   \kappa_n(s) \beta \left (- \kappa_n(s)\frac{d u^*}{d s}(s)\right ). 
	\end{align}
\end{lemma}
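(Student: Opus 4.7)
I would begin by noting that $u^*$ is non-increasing on $(0,|\Omega_1|)$, hence differentiable almost everywhere by Lebesgue's theorem on monotone functions, with $-\tfrac{du^*}{ds}\ge 0$. For the $L^\infty$ bound I would invoke the $W^{1,p}$ P\'olya--Szeg\H{o} inequality, which gives $\|\nabla u^\star\|_{L^p(\Omega_1^\star)}\le\|\nabla u\|_{L^p(\Omega_1)}$ for every finite $p$; dividing by $|\Omega_1|^{1/p}$ and letting $p\to\infty$ yields $\|\nabla u^\star\|_{L^\infty(\Omega_1^\star)}\le\|\nabla u\|_{L^\infty(\Omega_1)}$. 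Formula \eqref{unigradx} then identifies $|\nabla u^\star(x)|$ with $\kappa_n(s)\bigl(-\tfrac{du^*}{ds}\bigr)$ evaluated at $s=\omega_n|x|^n$, and the required boundedness follows.

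\textbf{Part 2 (divergence inequality).} Here I would adapt the classical Talenti approximation. For $t>0$ in the range of $u$ introduce the Lipschitz cutoff
\begin{equation*}
\phi_h(r)=\min\!\left(\frac{(r-t)_+}{h},\,1\right),\qquad h>0,
\end{equation*}
so that $\phi_h(0)=0$ and $\phi_h(u)\in H_0^1(\Omega_1)$. Since $a(|\nabla u|)\nabla u\in H^1(\Omega_1)$ has distributional divergence in $L^2(\Omega_1)$, integration by parts gives
\begin{equation*}
\int_{\Omega_1}\phi_h(u)\,\diver(a(|\nabla u|)\nabla u)\,dx=-\frac{1}{h}\int_{\{t<u<t+h\}}a(|\nabla u|)|\nabla u|^2\,dx .
\end{equation*}
As $h\downarrow 0$, dominated convergence (using $|\phi_h|\le 1$ and the $L^1$-integrability of the divergence) sends the left-hand side to $\int_{\{u>t\}}\diver(a(|\nabla u|)\nabla u)\,dx$. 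The coarea formula rewrites the right-hand side as
\begin{equation*}
-\frac{1}{h}\int_t^{t+h}\!\int_{\{u=\tau\}}a(|\nabla u|)|\nabla u|\,d\mathcal{H}^{n-1}\,d\tau ,
\end{equation*}
which, by Lebesgue differentiation, converges for a.e.\ $t$ to $-\int_{\{u=t\}}a(|\nabla u|)|\nabla u|\,d\mathcal{H}^{n-1}$.

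\textbf{Conclusion and main obstacle.} Combining the two limits and choosing $t=u^*(s)$ produces, for a.e.\ $s\in\Omega_1^*$, the identity
\begin{equation*}
-\int_{\{u>u^*(s)\}}\diver(a(|\nabla u|)\nabla u)\,dx=\int_{\{u=u^*(s)\}}a(|\nabla u|)|\nabla u|\,d\mathcal{H}^{n-1},
\end{equation*}
and \eqref{eq:rearrangement div inequality} follows by applying the previously derived pointwise inequality \eqref{PSlevx} to the right-hand side. The \emph{main technical obstacle} is the transfer from ``a.e.\ $t$'' to ``a.e.\ $s$'': on the countably many maximal flat intervals of $u^*$ one has $-du^*/ds=0$, so the right-hand side of \eqref{eq:rearrangement div inequality} vanishes and the inequality reduces to the non-negativity of a surface integral; outside such intervals $u^*$ is strictly decreasing and the area formula shows that the null set of bad $t$-values pulls back to a null set of $s$-values, closing the argument.
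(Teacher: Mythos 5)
Your proposal takes a genuinely different, and more direct, route than the paper. The paper proves \eqref{eq:rearrangement div inequality} first for $u\in\cC^\infty_c$ (divergence theorem on level sets plus \eqref{PSlevx}) and then handles the stated class $u\in W^{1,\infty}$, $a(|\nabla u|)\nabla u\in H^1$, by approximating with smooth $u_k$ and passing to the limit: a.e.\ convergence of the characteristic functions $\chi_{\{u_k>u_k^*(s)\}}$ for the divergence term, convexity and weak lower semicontinuity of $A$ for the rearranged term, and a final comparison after multiplying by $-\frac{du^*}{ds}\varphi(s)$. You instead work directly with the given $u$: the truncation $\phi_h(u)$ replaces the divergence theorem and, via coarea and Lebesgue differentiation, yields the identity between $-\int_{\{u>t\}}\diver(a(|\nabla u|)\nabla u)\,dx$ and $\int_{\{u=t\}}a(|\nabla u|)|\nabla u|\,d\HH^{n-1}$ for a.e.\ $t$. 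This is cleaner and avoids the whole approximation machinery, but it shifts the burden onto \eqref{PSlevx} being valid at a.e.\ level $t$ for a merely Lipschitz $u$: the paper states \eqref{PSlevelx}--\eqref{PSlevx} only in the preliminaries with a citation and, in its own proof, is careful to invoke them only for smooth functions. Talenti's derivation (isoperimetric inequality plus Jensen, using the convexity of $A$) does give this at the present regularity for a.e.\ $t$, so your route can be closed, but you should prove or cite that step explicitly rather than treat \eqref{PSlevx} as already established here. Part 1 is fine (the paper instead quotes Gehring's theorem for the Lipschitz continuity of $u^\star$); note that both your Pólya--Szegő argument and the truncation step implicitly use that $u$ vanishes on $\partial\Omega_1$, as does the paper.

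One step is wrong as written: outside the flat intervals, strict monotonicity of $u^*$ does not imply that a null set of bad $t$-values pulls back to a null set of $s$-values. A strictly decreasing $u^*$ can have vanishing derivative on a set of positive measure which is not a union of intervals, and on that set the ``area formula'' argument gives nothing, since $u^*$ maps it to a null set. The correct dichotomy is by the value of the derivative, not by flat intervals: where $\frac{du^*}{ds}(s)=0$ the right-hand side of \eqref{eq:rearrangement div inequality} vanishes and the inequality is trivial, because your own truncation identity shows $-\int_{\{u>t\}}\diver(a(|\nabla u|)\nabla u)\,dx\ge 0$ for \emph{every} $t$, not just a.e.\ $t$; where $\frac{du^*}{ds}(s)<0$, the preimage under $u^*$ of a null set of $t$'s is null (cover it by the sets on which $|u^*(s)-u^*(s')|\ge |s-s'|/k$ locally, on each of which $u^*$ is bi-Lipschitz onto its image). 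You should also record that off the flat pieces $\mu(u^*(s))=s$, which is what allows \eqref{PSlevx}, evaluated at $t=u^*(s)$, to produce the right-hand side at $s$ itself. With these repairs your argument is complete.
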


\begin{proof}
	We split the proof in several steps.
	\subparagraph{Step 1. $u \in \cC^\infty_c (\Omega)$.} By the divergence theorem, for a.e. $s \in \Omega_1^*$ the outer normal to $\{x: u(x)>u^*(s)\}$ is given by
	$$\nu (x)=-\frac {\nabla u(x)}{|\nabla u(x)|} \qquad \mbox{for } \HH^{n-1}\mbox{-a.e. } x\in\{u =u^*(s)\}\,,$$
	we get
	\begin{align}
	\label{p11} 
	-\int_{u>u^*(s)} \diver_x\left(a(|\nabla u|)\nabla u(x)\right) \,dx
	=\int_{u=u^*(s)} a(|\nabla u|) |\nabla u(x)|\, d\HH^{n-1}.
	\end{align}
	Taking \eqref{PSlevx} into account, we prove the result. 
	
	\subparagraph{Step 2. General case.} 
		 Let $u$ be as in the statement. Since $u$ is Lipschitz continuous and vanishes on the boundary, by \cite{Gehring1961} then $u^\star$ is Lipschitz continuous. In particular $\kappa_n(s) {du^*}/{ds} \in L^\infty(\Omega^*)$. 
		 There exists a sequence $u_k \in \cC^\infty_c (\Omega)$ such that
		 \begin{align*}
		 	u_k &\to u  \qquad  \text{ in } L^1( \Omega ) \\
		 	\nabla u_k & \rightharpoonup \nabla u  \qquad  \text{ in } L^\infty( \Omega )^n \\
		 	a(|\nabla u_k|) \nabla u_k & \rightharpoonup a(|\nabla u|) \nabla u \quad   \text{ in } H^1( \Omega )^n.
		 \end{align*}
		\subparagraph{Step 2a. Convergence of the rearranged term} We prove that%
		 
		, up to a subsequence, for any $0 \le \varphi \in L^\infty (0,|\Omega_1|)$ we have that
		\begin{align}
		\liminf_k \int_0^{|\Omega_1|} &  \kappa_n(s) \beta \left (-\kappa_n(s)\frac{d u_{k}^*}{d s}(s)\right ) \left( -\frac{d u_{k}^*}{d s}(s) \right) \varphi(s) ds \nonumber  \\
		&\ge \int_0^{|\Omega_1|} \kappa_n(s)  \beta \left (-\kappa_n(s)\frac{d u^*}{d s}(s)\right ) \left( - \frac{d u^*}{d s}(s) \right) \varphi(s) ds
					\label{eq:converge Au_k}
		\end{align}		
		It is clear that $\|\kappa_n(s) du_k^* / ds \|_{L^\infty} \le C$, hence, up to a subsequence (still denoted by $u_k$)
		 \begin{equation*}
		 	\kappa_n(s) \frac{du_{k}^*}{ds} \overset \star \rightharpoonup \xi \qquad \text{ in } L^\infty (\Omega^*).
		 \end{equation*}
		 Since $u_k \to u$ in $L^1 (\Omega)$ we have $u_k^* \to u^*$ in $L^1 (\Omega^*)$. Hence, for $\varphi$ such that $\kappa_n(s) \varphi \in W^{1,\infty }(\Omega^*)$ we have
		 \begin{equation*}
		 	\int_{\Omega_1^*} \xi \varphi = \lim_k \int_{\Omega_1^*} \kappa_n(s) \frac{du_{k}^*}{ds} \varphi = -	\lim_k \int_{\Omega_1^*}u_{k}^*  \frac{d}{ds} (\kappa_n(s) \varphi) = - \int_{\Omega_1^*}u ^*  \frac{d}{ds} (\kappa_n(s)\varphi) = \int_{\Omega_1^*}\kappa_n(s)\frac{du^*}{ds} \varphi .
		 \end{equation*}
		 Hence, 
		 \begin{equation*}
		 	\xi = \kappa_n(s)\frac{du^*}{ds}.
		 \end{equation*}
		 
		Fix $ 0\le \varphi \in L^\infty (0,|\Omega_1|)$. Since $A$ is convex and continuous, the map
		\begin{equation*}
			g \mapsto \int_0^{|\Omega_1|} A(g(s)) \varphi(s) ds
		\end{equation*}
		is convex and lower semicontinuous in the topology of $L^p (\Omega)$ for any $p \ge 1$. Therefore, it also weak-lower semicontinuous in $L^p (\Omega)$. Thus,		
		 \begin{align*}
		 	\liminf_k \int_0^{|\Omega_1|}  & \beta \left (-\kappa_n(s)\frac{d u_{k}^*}{d s}(s)\right ) \left( - \kappa_n(s)\frac{d u_{k}^*}{d s}(s) \right) \varphi(s) ds\\
		 	&= 		\liminf_k \int_0^{|\Omega_1|}  A \left (-\kappa_n(s)\frac{d u_{k}^*}{d s}(s)\right ) \varphi(s) ds\\
		 	& \ge 	 \int_0^{|\Omega_1|}  A \left (-\kappa_n(s)\frac{d u^*}{d s}(s)\right ) \varphi(s) ds\\
		 	&=	\int_0^{|\Omega_1|}   \beta \left (-\kappa_n(s)\frac{d u^*}{d s}(s)\right ) \left( - \kappa_n(s)\frac{d u^*}{d s}(s) \right) \varphi(s) ds
		 \end{align*}

		 \subparagraph{Step 2b. Convergence of the divergence term} Let us prove that
		 \begin{equation}
		 \label{eq:convergence int a(u_k)u_k}
		 -\int_{\{u_{k}>u_{k}^*(\cdot)\}}  \diver\Big (a(|\nabla u_{k}|)\nabla u_{k}\Big) \, dx  \longrightarrow  -\int_{\{u>u^*(\cdot)\}}  \diver\Big (a(|\nabla u|)\nabla u\Big) \, dx, \qquad \text { in } L^1 (\Omega^*).
		 \end{equation}
		 Consider the map
		 \begin{equation*}
		 s \in \Omega_1^* \mapsto F_k (s) = -\int_{\{u_{k}>u_{k}^*(s)\}}  \diver\Big (a(|\nabla u_{k}|)\nabla u_{k}\Big) \, dx = -\int _\Omega   \diver\Big (a(|\nabla u_{k}|)\nabla u_{k}\Big) \, \chi_{\{u_{k}>u_{k}^*(s)\}} \, dx .
		 \end{equation*}
		 We have that $\diver\Big (a(|\nabla u_{k}|)\nabla u_{k}\Big)$ converges weakly in $L^2$. Let us prove that, for a.e. $s \in \Omega^*$
		 \begin{equation}
		 	\label{eq:measure of level set u_k}
		 	\chi_{\{u_{k}>u_{k}^*(s)\}}   \longrightarrow \chi_{\{u >u^*(s)\}}  \text{ in } L^2 (\Omega)  .
		 \end{equation} 
		 First, let us prove the convergence a.e. $x \in \Omega$:
		 if, $s$ is such that 
		 \begin{equation}
		 	\label{eq:ukstar pointwise}
		 	\lim_k u_k^*(s) = u^* (s) 
		 \end{equation}
		 then
		 \begin{equation*}
		 	\left \{ x \in \Omega : \lim _k \chi_{\{u_{k}>u_{k}^*(s)\}} (x)  \ne \chi_{\{u >u^*(s)\}} (x)    \right \} \subset  \{ x \in \Omega: u(x) \ne u^*(s) \}.
		 \end{equation*}
		 Indeed, let $s \in \Omega^*$ and $x \in \Omega$ be such that $u(x) < u^*(s)$. Take $\varepsilon = (u^*(s) - u(x)) / 4$. For $k \ge k_\ee$ large enough $|u^*_k(s) - u^*(s)| \le \varepsilon$ and (since $u_k$ converges in $\cC (\overline \Omega)$),  $|u_k(x) - u(x)| \le \varepsilon$. But then $u_k (x) < u_k^*(s)$. Hence $\chi_{\{u_{k}>u_{k}^*(s)\}} (x) = \chi_{\{u >u^*(s)\}} (x)$. The same holds for the limit. We can repeat the same argument if $u(x) > u^*(s)$. \\ 
		 Since $u_k^* \to u^*$ in $L^1 (\Omega^*)$, up to a subsequence, $u_k^* \to u^*$ a.e. Hence, \eqref{eq:ukstar pointwise} holds a.e. On the other hand, $$
		 \LL^n \{ x \in \Omega: u(x) \ne u^*(s) \} = 
		 \mu(u^*(s)^-) - \mu (u^*(s)).$$ 
		 Since $u^*$ and $\mu$ are monotone functions, the set of $s$ such that $\mu(u^*(s))$ is discontinuous at $s$ is countable. Hence, the set of $s$ such that \eqref{eq:measure of level set u_k} does not hold has measure $0$. \\
		  Since the sequence is pointwise bounded by $1$, due the Dominated Convergence Theorem we have \eqref{eq:measure of level set u_k}. \\ 
		 Hence, as $k\rightarrow +\infty,$ 
		 \begin{equation*}
			 	F_k(s) = -\int_{\Omega}  \diver\Big (a(|\nabla u_{k}|)\nabla u_{k}\Big) \chi_{\{u_{k}>u_{k}^*(s)\}}\,dx \longrightarrow 	-\int_{\Omega}  \diver\Big (a(|\nabla u|)\nabla u\Big) \chi_{\{u>u^*(s)\}}\,dx, \qquad \text{a.e. } s \in \Omega^*.
		 \end{equation*}
		 It is clear that
		 \begin{equation*}
		 	|F_k(s)| \le \int_{\Omega}  \left | \diver\Big (a(|\nabla u_{k}|)\nabla u_{k}\Big) \right| dx \le C.
		 \end{equation*}
		 Since we have the pointwise limit, due to the dominated convergence theorem, we recover \eqref{eq:convergence int a(u_k)u_k}.

		 \subparagraph{Step 2c. Comparison of the limits}We apply Step 1 to this final subsequence. 
		 
		 We multiply both sides by $-\frac{du^*}{ds} \varphi(s)$, integrate in $s$ and pass to the limit to deduce that
		 \begin{align*}
		 	\int_0^{|\Omega_1|}& \left \{ - \int_{\{u>u^*(s)\}}   \diver\Big (a(|\nabla u|)\nabla u\Big) \, dx \right \}  \left( - \frac{du^*}{ds} \right)  \varphi (s) d s \\ 
		 	& \ge   \int_0^{|\Omega_1|}  \kappa_n(s) \beta \left (- \kappa_n(s)\frac{d u^*}{d s}(s)\right )  \left( - \frac{du^*}{ds} \right)  \varphi (s) d s\,.
		 \end{align*}
		 Since this holds for any $\varphi$, we have that for a.e. $s \in [0,|\Omega_1|]$
		 \begin{equation*}
		 	 \left \{ - \int_{\{u>u^*(s)\}}  \diver\Big (a(|\nabla u|)\nabla u\Big) \, dx \right \}  \left( - \frac{du^*}{ds} \right)  \ge  \kappa_n(s) \beta \left (- \kappa_n(s)\frac{d u^*}{d s}(s)\right )  \left( - \frac{du^*}{ds} \right)  .
		 \end{equation*}
		 Taking into account \eqref{p11}  we have that
		 \begin{equation*}
		 	- \int_{\{u>u^*(s)\}}  \diver\Big (a(|\nabla u|)\nabla u\Big) = - \lim_k  \int_{\{u_k>u_k^*(s)\}}  \diver\Big (a(|\nabla u_k|)\nabla u_k\Big)  \, dx \ge 0.
		 \end{equation*}
		 Hence, \eqref{eq:rearrangement div inequality} holds when $du^* / ds = 0$. Everywhere else $du^* / ds > 0$ so we can divide an recover the result.
\end{proof}

\begin{remark}
	An alternative proof of \eqref{eq:convergence int a(u_k)u_k} can be done by applying that the symmetrisation is non-expansive in $L^2 (\Omega)$.
\end{remark}

\subsubsection{Proof of \Cref{prop:Ph rearrangement}}
	We proceed as in \cite{Talenti1977} for the $\nabla$, and using standard inequalities for the rest. By \Cref{lem:PS} we have \eqref{eq:integrability of U_{ss}}. \\
	
	To check that the inequality of \eqref{eq:Ph star subsol} is satisfied, for $s \in [0,|\Omega_1|]$ we can integrate over the level set of $u_j$ 
\begin{eqnarray}
\label{p1}
 & & -\int_{u_j>u_j^*(s)} \diver\Big(a(|\nabla u_{{j}} |) 
\nabla u_{{j}} (x)\Big)\,dx
\\
\label{p2}
	& &	+\int_{u_j>u^*_j(s)} \frac{-u_{j+1} + 2u_j - u_{j-1}}{h^2} dx\\
\label{p3}
		& = & \int_{u_j>u^*_j(s)}  f_{{j}}(x) dx \, .
\end{eqnarray}
Notice that, due to \eqref{eq:regularity a(nabla u) nabla u} is, \eqref{p1} is well defined. Let us consider separately the three quantities which appear above.
	As regards to \eqref{p1}, we apply \Cref{lem:PS}, and hence, for a.e. $s \in \Omega_1^*$
	\begin{align}
		-\int_{u_j>u_j^*(s)} \left(\mbox{div}_x\left(a(|\nabla u_{{j}} |)\nabla u_j\right)\right)\,dx\label{p12} \ge   \kappa_n(s)\beta \left (- \kappa_n(s)\frac{\partial u_j^*}{\partial s}(s)\right ).
	\end{align}
	As regards to the term \eqref{p2}, it is a standard rearrangement inequality that
	\begin{equation*}
		\int_{u_j>u^*_j(s)} u_{k} \le \int_{0}^{s} u^*_k, \qquad \forall k 
	\end{equation*}
	and
	\begin{equation*}
		\int_{u_j>u^*_j(s)} u_{j} = \int_{0}^{s} u^*_j,
	\end{equation*}
	so that we get
	\begin{equation}\label{p22}
		\int_{u_j>u^*_j(s)} \frac{-u_{j+1} + 2u_j - u_{j-1}}{h^2} dx \ge \int_{0}^{s}  \frac{-u_{j+1}^* + 2u_j^* - u_{j-1}^*}{h^2} dx .
	\end{equation}
	Finally as regards \eqref{p3}, by a classical property of rearrangements we get
	\begin{equation}\label{p31}
	\int_{u_j>u^*_j(s)} f_{{j}} (x)\,dx \le \int_0^{s}f_{{j}} ^*(\sigma)\,d\sigma\,.
	\end{equation}
	Collecting \eqref{p12}-\eqref{p31} we get that the function $U_{j}$ is a weak solution of 
	$(\underline{\textrm{P}_h^*})$ with $\frac{d ^2 U_j}{d s ^2} \in L^\infty$. 
	This completes the proof for $U_j $. \\
	Analogously, the same arguments apply to the equation in \eqref{eq:Ph sharp}: since the solution $v_j$ equals $v_j^\#$, then all the inequalities in \eqref{p12}-\eqref{p31} hold as equalities.\qed

\subsection{Comparison principle for \eqref{eq:Ph star}. Mass comparison for \eqref{eq:Ph}}
\label{sec:comparison principle}

The aim of this section is to prove the following
\begin{proposition}
	\label{prop:Ph star comparison}
	Let $\mathbf U$ and $\mathbf V$ be as in \Cref{prop:Ph rearrangement}. Then $U_j \le V_j$ for all $j$. Hence
	\begin{equation}
		\int_0^s u_j^*  \le \int_0^s v_j^* \qquad \forall j,  \textrm{ a.e. } s \in [0,|\Omega_1|].
	\end{equation}
\end{proposition}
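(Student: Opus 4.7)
Set $W_j := U_j - V_j$ for $j = 0, \ldots, N+1$. Subtracting \eqref{eq:Ph star} from the inequality \eqref{eq:Ph star subsol} gives, for $j = 1, \ldots, N$,
$$\kappa_n(s)\bigl[\beta(-\kappa_n(s) U_j''(s)) - \beta(-\kappa_n(s) V_j''(s))\bigr] - \frac{W_{j+1} - 2 W_j + W_{j-1}}{h^2} \le 0,$$
with boundary data $W_0 \equiv W_{N+1} \equiv 0$, $W_j(0) = 0$, and $W_j'(|\Omega_1|) = 0$. Because $U_j(s) = \int_0^s u_j^*\,d\sigma$ and $V_j(s) = \int_0^s v_j^*\,d\sigma$, the desired mass comparison is exactly $W_j \le 0$. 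My plan is to establish this through a weak maximum principle for the coupled differential--difference inequality above, which is the concrete form of the $L^\infty$ T-accretivity flagged in the introduction.

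I would argue by contradiction. Each $W_j$ belongs to $C(\overline{\Omega_1^*})$ by \Cref{prop:Ph rearrangement} and $j$ ranges over a finite set, so $M := \max_{j, s} W_j(s)$ is attained at some $(j_0, s_0)$. Assume $M > 0$. Then $s_0 > 0$ because $W_{j_0}(0) = 0 < M$. Being a maximum in $j$ yields $W_{j_0 \pm 1}(s_0) \le W_{j_0}(s_0) = M$, so $-\Delta_h W_{j_0}(s_0) \ge 0$. Being a maximum of $W_{j_0}$ in $s$ forces $W_{j_0}''(s_0) \le 0$: at interior $s_0$ this is the classical second-derivative test, and at the Neumann endpoint $s_0 = |\Omega_1|$ the same conclusion follows from a one-sided Taylor argument based on $W_{j_0}'(|\Omega_1|) = 0$ and $W_{j_0}(s) \le M$. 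Since $\kappa_n(s_0) > 0$ and $\beta$ is non-decreasing, the $\beta$-bracket at $(j_0, s_0)$ is $\ge 0$. The subtracted inequality then squeezes both non-negative quantities to zero; in particular $\Delta_h W_{j_0}(s_0) = 0$, which together with $W_{j_0 \pm 1}(s_0) \le M = W_{j_0}(s_0)$ forces $W_{j_0 - 1}(s_0) = W_{j_0 + 1}(s_0) = M$. Iterating the argument at $(j_0 - 1, s_0), (j_0 - 2, s_0), \ldots$, we eventually obtain $W_0(s_0) = M > 0$, contradicting $W_0 \equiv 0$.

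The main technical obstacle is that $W_j''$ only lies in $L^\infty$ (cf.\ \eqref{eq:integrability of U_{ss}}), so both the pointwise second-derivative test and the Neumann endpoint need interpretation in an essentially-averaged sense. The rigorous alternative I would use is to replace the pointwise argument by testing the subtracted inequality against $\phi_j = (W_j - k)^+$ for every $k > 0$, then summing in $j$ and integrating in $s$. Summation by parts in $j$ is legitimate since $\phi_0 = \phi_{N+1} = 0$ (from $W_0 = W_{N+1} = 0$ and $k > 0$), and converts the discrete-Laplacian contribution into
$$\sum_{j = 0}^N \int_0^{|\Omega_1|} \frac{W_{j+1} - W_j}{h}\,\frac{\phi_{j+1} - \phi_j}{h}\,ds \ge 0,$$
thanks to the monotonicity of $r \mapsto (r-k)^+$. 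A careful integration by parts in $s$—whose boundary terms vanish because $\phi_j(0) = 0$ and $W_j'(|\Omega_1|) = 0$—combined with the monotonicity of $\beta$ shows that the $\kappa_n\beta$-term is likewise $\ge 0$. The two non-negative contributions being bounded above by $0$ forces $\phi_j \equiv 0$ for every $k > 0$, which means $W_j \le 0$ and hence the mass comparison $U_j \le V_j$.
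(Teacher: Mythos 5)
Your reduction to showing $W_j=U_j-V_j\le 0$ and your instinct that this is a maximum-principle/accretivity statement are on target, but the step you offer as the rigorous core does not hold up. Testing the subtracted inequality with $\phi_j=(W_j-k)^+$ requires the sign claim
\begin{equation*}
\sum_j\int_0^{|\Omega_1|}\kappa_n(s)\Bigl[\beta\bigl(-\kappa_n(s)U_j''\bigr)-\beta\bigl(-\kappa_n(s)V_j''\bigr)\Bigr]\,(W_j-k)^+\,ds\;\ge\;0,
\end{equation*}
and this does not follow from ``integration by parts combined with the monotonicity of $\beta$''. Monotonicity of $\beta$ gives a sign only for the pairing of the bracket with $-\kappa_n(s)\,(U_j''-V_j'')$: the bracket has, pointwise a.e., the sign of $-W_j''$, but its \emph{magnitude} depends on where $-\kappa_n U_j''$ and $-\kappa_n V_j''$ sit on the graph of $\beta$ and bears no relation to $|W_j''|$; it can be large and negative precisely on sets where $W_j>k$ and $W_j''>0$, and small where $W_j''<0$. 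No integration by parts converts the test function $(W_j-k)^+$ into such an admissible pairing, because the weight $\kappa_n(s)$ sits both inside and outside $\beta$. Even in the linear model $\beta(t)=t$, two integrations by parts give $\int\kappa_n^2\,|((W_j-k)^+)'|^2+(\kappa_n^2)'(|\Omega_1|)\,\tfrac12((W_j(|\Omega_1|)-k)^+)^2-\int(\kappa_n^2)''\,\tfrac12((W_j-k)^+)^2$, and $(\kappa_n^2)''>0$ for $n\ge 3$, so the sign is not free but would need a weighted Hardy-type inequality with a constant that is not available; for nonlinear $\beta$ there is no such structure at all. In effect your ``rigorous alternative'' asserts an $L^2$/Hilbert-space accretivity of $AU=\kappa_n(s)\beta(-\kappa_n(s)U_{ss})$, which is exactly what is missing here (and is the open issue the paper refers to); your first, pointwise argument has the defect you yourself flag, plus the fact that an a.e.\ differential inequality cannot be evaluated at the single point $s_0$.

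The paper circumvents both problems by proving that $A$ is $T$-accretive in $L^\infty$ (\Cref{lem:A is T accretive}): it reflects evenly across the Neumann endpoint, argues by contradiction on the closed set $\Omega_+$ where $\widetilde U-\widetilde V$ is within $\mu$ of its supremum (so no pointwise evaluation of $L^\infty$ second derivatives is ever needed), deduces $-(\widetilde U-\widetilde V)_{ss}\le 0$ a.e.\ there from the monotonicity of $\beta$, and uses the one-dimensional maximum principle to push the maximum to $\partial\Omega_+$, a contradiction. Then, instead of a global test-function argument in $(j,s)$, it applies this resolvent inequality with $\lambda=h^2/2$ componentwise to get $\|(U_j-V_j)_+\|_{L^\infty}\le\tfrac12\|(U_{j+1}-V_{j+1})_+\|_{L^\infty}+\tfrac12\|(U_{j-1}-V_{j-1})_+\|_{L^\infty}$, i.e.\ the vector of sup-norms satisfies \eqref{ineqcrucial}, and concludes from the positive definiteness (Cholesky factorization) of the matrix $D_2$. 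Your treatment of the discrete Laplacian by summation by parts is fine, but to make your scheme work you would have to replace the $(W_j-k)^+$ test by an argument at the level of sup-norms, i.e.\ essentially reprove \Cref{lem:A is T accretive}.
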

For the analysis of \eqref{eq:Ph star} we improve and close some open question raised in some previous literature concerning Hilbert spaces or reflexive Banach spaces \cite{Poffald+Reich1985,Reich+Shafrir1991,Benilan+Spiteri1987}. The keystone is to prove the so-called  $T$-accretivity in $L^\infty$ of some suitable operator. This is inspired in the proof of \cite[Theorem 1]{Diaz1991}.
Let us consider the operator 
\begin{equation*}
	A U =  \kappa_n(s) \beta \left (- \kappa_n(s) \,\frac{d^2 U}{d s^2}\right)\,,
\end{equation*}
defined in the domain
\begin{equation*}
	D(A) = \left\{  U \in L^\infty  (\Omega_1^*) :  \kappa_n(s) \,\frac{d^2 U}{d s^2} \in L^\infty (\Omega_1^*), \ \frac{dU}{ds} (|\Omega_1|) = 0,\ U(0) = 0    \right \}.
\end{equation*}

\subsubsection{The operator $A$ is $T$-accretive in $L^\infty$}

Let us prove that $A$ is $T$-accretive in $L^\infty$.
\begin{lemma}
	\label{lem:A is T accretive}
	Let $\beta$ be non-decreasing. Then,
	for all $U, V \in D(A)$ and $\lambda > 0$, we have that
	\begin{equation}
	\label{eq:A T-accrevity}
	\Big\| (U- V)_+ \Big\|_{L^\infty} \le \Bigg\| \Big(U-V + \lambda(AU - AV) \Big)_+ \Bigg\|_{L^\infty}.
	\end{equation}
\end{lemma}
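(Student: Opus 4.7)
The plan is to prove \eqref{eq:A T-accrevity} by a contradiction-via-strict-convexity argument. Set $W = U - V$, $F = W + \lambda(AU - AV)$ and $K = \|F_+\|_{L^\infty}$; my aim is to show that the set $E := \{s \in [0, |\Omega_1|] : W(s) > K\}$ is empty, which then gives $\|W_+\|_{L^\infty} \le K$ and hence \eqref{eq:A T-accrevity}. Since $U, V \in D(A)$, the bound $\kappa_n U_{ss}, \kappa_n V_{ss} \in L^\infty$ together with $1/\kappa_n \in L^1(0, |\Omega_1|)$ (because $1/n' < 1$) yields $U_{ss}, V_{ss} \in L^1(0, |\Omega_1|)$, so $U, V \in C^1([0, |\Omega_1|])$. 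In particular $W$ is continuous on $[0, |\Omega_1|]$, so $E$ is an open subset of $[0, |\Omega_1|]$ whose connected components are intervals.

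The key monotonicity step is that on $E$ one has $W > K \ge F$ a.e., so $\lambda(AU - AV) = F - W < 0$ a.e.\ on $E$. Unpacking the definition of $A$, this reads $\beta(-\kappa_n(s) U_{ss}(s)) < \beta(-\kappa_n(s) V_{ss}(s))$ a.e.\ on $E \cap (0, |\Omega_1|]$, and since $\beta$ is non-decreasing the contrapositive $\beta(a) > \beta(b) \Rightarrow a > b$ forces $W_{ss} = (U-V)_{ss} > 0$ a.e.\ on $E$. Equivalently, $W'$ is strictly increasing, hence $W$ is strictly convex, on every connected component of $E$.

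The final step is a short case analysis on the form of a component, using the two boundary conditions $W(0) = 0$ and $W'(|\Omega_1|) = 0$ that are built into $D(A)$. Since $K \ge 0$ and $W(0) = 0$, $0 \notin E$, so at the left endpoint $a$ of any component one has $W(a) = K$. Each component is then either (i) an interval $(a, b)$ with $b < |\Omega_1|$ and $W(a) = W(b) = K$, in which case the strict chord inequality for strictly convex functions gives $W(s) < K$ on $(a, b)$, contradicting $W > K$ there; or (ii) an interval of the form $(a, |\Omega_1|]$, in which case strict monotonicity of $W'$ combined with $W'(|\Omega_1|) = 0$ forces $W'(s) < 0$ on $(a, |\Omega_1|)$, so $W$ is strictly decreasing on $[a, |\Omega_1|]$, contradicting $W(|\Omega_1|) > K = W(a)$. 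Both cases being impossible, $E = \emptyset$.

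I expect the main obstacle to be case (ii), the potential maximum at the Neumann endpoint $s = |\Omega_1|$: a standard interior maximum-principle argument does not directly exclude such a boundary maximum, and it is precisely the interplay of strict convexity (coming from $\beta$ being non-decreasing, via the contrapositive) with the Neumann-type condition $U'(|\Omega_1|) = V'(|\Omega_1|) = 0$ encoded in $D(A)$ that permits ruling it out. Correspondingly, the mild regularity $U, V \in C^1$ up to the right endpoint is essential for the chord/monotonicity arguments to be rigorous.
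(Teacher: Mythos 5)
Your proof is correct, but it takes a genuinely different route from the paper's. The paper handles the Neumann condition at $s=L=|\Omega_1|$ by an even reflection across $s=L$, so that $\widetilde U-\widetilde V$ vanishes at both endpoints of $(0,2L)$; it then argues by contradiction, fixing a small $\mu>0$, introducing the superlevel set $\Omega_+$ of $(\widetilde U-\widetilde V)_+$ near its maximum (compactly contained in $(0,2L)$), deducing $-(\widetilde U-\widetilde V)_{ss}\le 0$ there by the same inversion of the monotone $\beta$ that you use, and concluding with the weak maximum principle that the maximum is attained on $\partial\Omega_+$, contradicting the choice of $\Omega_+$. You instead work directly on $[0,L]$ with the superlevel set $E=\{W>K\}$, $K=\|F_+\|_{L^\infty}$, upgrade to the strict inequality $W_{ss}>0$ a.e.\ on $E$ (the paper only records the non-strict one, which is partly why it needs the $\mu$-perturbation), and eliminate the components of $E$ by the strict chord inequality and, at the right endpoint, by strict monotonicity of $W'$ combined with $W'(L)=0$; the regularity input you invoke ($U_{ss},V_{ss}\in L^1$, hence $U,V\in C^1([0,L])$, from $\kappa_n^{-1}\in L^1$) is exactly the one the paper also extracts. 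Your approach buys a shorter, reflection-free and perturbation-free argument in which the Neumann condition is used directly where it lives, while the paper's reflection converts everything into a purely interior maximum-principle argument. One cosmetic point: your dichotomy of components should also allow an interval $(a,L)$ with $L\notin E$ (i.e.\ $b=L$ but $W(L)=K$); this case is handled verbatim by your case (i) chord argument, which never actually uses $b<|\Omega_1|$.
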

\begin{proof}
	For the length of this section let $L = |\Omega_1|$.
	We check that
	\begin{equation*}
	A U =  \kappa_n(s) \beta \left( -  \kappa_n(s) U_{ss}  \right). 
	\end{equation*}
	There is an inverse operator
	\begin{equation*}
	\begin{dcases}
	A U = F \textrm{ in } (0,L) \\
	U_s (L) = U(0) = 0.
	\end{dcases}
	\end{equation*}
	We consider the even extension
	\begin{equation*}
	\widetilde U (s) = \begin{dcases}
	U(s) & s \in [0,L) , \\
	U(L-s) & s \in (L,2L).
	\end{dcases}
	\end{equation*}
	Since $U_s(0) = 0$, $\widetilde U(s)$ is a solution of
	\begin{equation*}
	\begin{dcases}
	\widetilde A \widetilde U = \widetilde F \textrm{ in } (0,2L), \\
	\widetilde U (0) = \widetilde U(2L) = 0,
	\end{dcases} 
	\end{equation*}
	where
	\begin{equation*}
	\widetilde A \widetilde U =  \widetilde {\kappa_n(s)} \beta ( - \widetilde  {\kappa_n(s)} \widetilde U_{ss} ) ,
	\end{equation*}
	and $\widetilde  {\kappa_n(s)}$ and $\widetilde F$ are the even extensions of $ \kappa_n(s)$ and $F$. 
	If $U \in D(A)$ then $ \kappa_n(s) U_{ss} \in L^\infty (0,L)$ and so $A U \in L^\infty (0,L)$. 
	Furthermore, since $ \kappa_n(s)^{-1} \in L^1 (\Omega)$ then $ U_{ss} =   \kappa_n(s)^{-1}  \kappa_n(s)  U_{ss} \in L^1 (0,L)$. Since $\kappa_n (s) > 0$ in $(0,L)$ we have that $\widetilde U_{ss} (s) \ge 0 $ if and only $\widetilde A \widetilde U (s) \ge 0$.	Finally, notice that $ \kappa_n(s)$ only vanishes at $0$, which does not affect the a.e. interior equalities. \\

	Suppose \eqref{eq:A T-accrevity} does not hold. Then, for some $U,V$ and $\lambda >0$ there exists $\mu > 0$ small such that 
	\begin{equation*}
	\| (U- V)_+ \|_{L^\infty} - \mu >   \| (U-V + \lambda(AU - AV))_+ \|_{L^\infty} \ge 0.
	\end{equation*}
	Thus $\| (U-V)_+ \|_{L^\infty} > \mu > 0$. The same holds for the even extension
	\begin{equation*}
	\| (\widetilde U- \widetilde V)_+ \|_{L^\infty} - \mu >   \| (\widetilde U-\widetilde V + \lambda(\widetilde A\widetilde U - \widetilde A\widetilde V))_+ \|_{L^\infty} \ge 0.
	\end{equation*}

	Define the closed set of positive measure
	\begin{align*}
	\Omega_+ &= \{ s \in (-L,L) :  (\widetilde U(s) - \widetilde V(s))_+ \ge - \mu +  \| (\widetilde U - \widetilde V)_+ \|_{L^\infty} \} .
	\end{align*}
	Notice that, in $\Omega_+$ we have $\widetilde U-\widetilde V > 0$. In particular, $\Omega_+ \Subset (0,2L)$. This set is selected so that
	\begin{equation*}
	\| (\widetilde U-\widetilde V)_+ \|_{L^\infty} = \max_{\Omega_+}  (\widetilde U-\widetilde V)_+   = \max_{\Omega_+} ( \widetilde U - \widetilde V ).
	\end{equation*}
	In $\Omega_+$ we have that
	\begin{align*}
	\widetilde U(s) - \widetilde V(s) &=  (\widetilde U(s) - \widetilde V(s))_+ \ge  -\mu + \| (\widetilde U-\widetilde V)_+ \|_{L^\infty} \\
	&>  \| (\widetilde U - \widetilde V+ \lambda(\widetilde A\widetilde U - \widetilde A\widetilde V))_+ \|_{L^\infty} \\
	&\ge \Big (\widetilde U(s) - \widetilde V(s) +  \lambda(\widetilde A\widetilde U(s) - \widetilde A\widetilde V(s)) \Big )_+ \\
	&\ge \widetilde U(s) - \widetilde V(s) +  \lambda(\widetilde A\widetilde U(s) - \widetilde A\widetilde V(s)).
	\end{align*}
	Therefore
	\begin{equation*}
	\widetilde A\widetilde U  < \widetilde A\widetilde V \textrm{ a.e. in } \Omega_+.
	\end{equation*}
	Then, $-\widetilde U_{ss} \le -\widetilde V_{ss}$ a.e. in $ \Omega_+$. Since $AU, AV \in L^\infty(0,L)$ we have that
	\begin{equation*}
	0 \ge -( \widetilde U - \widetilde V )_{ss} \in L^1 (\Omega_+).
	\end{equation*}
	Due to the maximum principle and the continuity of $\widetilde U-\widetilde V$, the maximum of $\widetilde U-\widetilde V$ is attained in $ \partial \Omega_+$. Therefore 
	\begin{equation*}
	- \mu + \| (\widetilde U -\widetilde V)_+ \|_{L^\infty} = \max_{\partial \Omega_+} (\widetilde U  - \widetilde V)  = \max_{\Omega_+} (\widetilde U  - \widetilde V) = \max_{\Omega_+}  (\widetilde U  -\widetilde V)_+ = \| (\widetilde U  -\widetilde V)_+ \|_{L^\infty}.
	\end{equation*}
	This is a contradiction. The proof is complete.
\end{proof}

\subsubsection{Proof of \Cref{prop:Ph star comparison}}
\label{sec:proof of comparison discrete}
Due to \eqref{eq:Ph star subsol} and \eqref{eq:Ph star} we have
\begin{equation*}
	\frac{h^2} 2 (A U_j - A V_j) + (U_j - V_j) \le \frac{1}{2} (U_{j+1} - V_{j+1}) + \frac{1}{2} (U_{j-1} - V_{j-1}). 
\end{equation*}
Applying \Cref{lem:A is T accretive}
\begin{equation*}
	\| (U_j - V_j)_+ \|_{L^\infty} \le  \frac{1}{2} \|(U_{j+1} - V_{j+1})_+\|_{L^\infty} + \frac{1}{2} \|(U_{j-1} - V_{j-1})_+\|_{L^\infty}.
\end{equation*}
We can rewrite this as
\begin{equation}
\label{ineqcrucial}
	\begin{pmatrix}
		2  & -1         \\
		-1 & 2         & -1         \\
		   &  \ddots & \ddots & \ddots  \\
		   &            & -1       & 2        & -1 \\
		   &            &           & -1       & 2
	\end{pmatrix} \begin{pmatrix}
		\| (U_1 - V_1)_+ \|_{L^\infty} \\
		\vdots \\
		\| (U_N - V_N)_+ \|_{L^\infty}
	\end{pmatrix} \le \mathbf 0
\end{equation}
where the inequality holds coordinate by coordinate. Let us call the matrix $D_2$ and denote the vector by $x$ in (\ref{ineqcrucial}). Notice that the vector components are non-negative.
We have the Cholesky decomposition
\begin{equation*}
	D_2 = C^t C, \qquad \text{where } C= \begin{pmatrix}
		1 & -1 \\
		0 & 1 & -1 \\
		& \ddots & \ddots & \ddots \\
		& & 0 & 1 & -1 \\
		& & & 0 & 1 
	\end{pmatrix}.
\end{equation*}
Multiplying (\ref{ineqcrucial}) by $x$, we obtain
\begin{equation*}
0\le 	\left \| C x \right \|_{2}^2 
	= x^t C^t C x = x^t A x \le 0.
\end{equation*}
Therefore $\| (U_j - V_j)_+\|_{L^\infty} = 0$. Hence $U_j \le V_j$. \qed 
 
\begin{remark}
	Notice that, in the proof, we use that the coefficient of the discrete Laplacian are non-positive outside the diagonal. More involved arguments can be applied overcoming this issue.
\end{remark}

\subsection{Convergence of the solutions of \eqref{eq:Ph} to the solution of \eqref{eq:P} as $h \to 0$}
\label{sec:convergence Ph to P}

From now on we use again the notation $\nabla_x$. Our aim is prove that we can pass to the limit $u^h \to u$ at least in $L^1 (\Omega_1 \times \Omega_2)$. This will be sufficient to show that the comparison of masses is preserved.\\

We make use of the \emph{floor function}: $$\lfloor z \rfloor = \min \{  k \in \mathbb Z : k \ge z  \}$$
i.e. $\lfloor z \rfloor =k $ means $k \le z < k +1$.
\begin{theorem}
	\label{thm:h to 0}
	Let $f \in \mathcal C^ \infty_c({\Omega_1 \times \Omega_2})$ and $\beta$ satisfy \eqref{eq:beta smooth}. Let $u$ denote the solution of \eqref{eq:P}. \\
	Let $N \in \mathbb N$ and let $h = 1/(N+1)$, and consider the constant interpolation
	\begin{align*}
	f^h (x,y) = f (x,jh) , \qquad j = \left \lfloor \frac{y}{h} \right \rfloor.
	\end{align*}
	Define $\mathbf u^h = (u_j^h)$ the unique solution of \eqref{eq:Ph} with data $\mathbf f = (f_j)$ and let us consider the linear interpolation
	\begin{align*}
	u^h (x,y) = u^h_j (x) + \frac{u^h_{j+1} (x) - u^h_j (x)}{h} (y - jh), \qquad j = \left \lfloor \frac{y}{h} \right \rfloor.
	\end{align*}
	Then
	\begin{enumerate}
		\item $u^h$ is a bounded sequence in $ H_0^1 (\Omega)$. 
		
		\item $u^h \rightharpoonup u$ weakly in $ H_0^1 (\Omega)$ as $h = \frac 1 {N+1} \to 0$.
	\end{enumerate}
	
\end{theorem}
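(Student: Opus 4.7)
The strategy is a standard discrete-to-continuous convergence argument: first I would obtain a uniform $H^1$ bound on $u^h$, extract a weak limit, and then identify it as the unique solution $u$ by passing to the limit in the weak formulation via a Minty-type monotonicity argument (using the strong monotonicity encoded in \eqref{eq:beta smooth}).

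\textbf{Step 1 (uniform energy bound).} I would test \eqref{eq:Ph weak vector} with $\bm\varphi=\mathbf u^h$, use the ellipticity $\beta(t)t\ge\varepsilon t^2$ from \eqref{eq:beta smooth}, and summation by parts in $j$ to obtain
\[
\varepsilon \sum_{j=1}^N \int_{\Omega_1} |\nabla_x u_j|^2\,dx + \sum_{j=0}^N \int_{\Omega_1} \Big(\tfrac{u_{j+1}-u_j}{h}\Big)^2 dx \le \sum_{j=1}^N \int_{\Omega_1} f_j u_j\,dx.
\]
Using Cauchy-Schwarz, Young, and Poincaré on $\Omega_1$, and noting that $\sum_j \|f_j\|_{L^2}^2$ is bounded by a Riemann sum yielding $O(1/h)$ (since $f\in\mathcal C^\infty_c$), I get $\sum_j \int |\nabla_x u_j|^2 + \sum_j \int |(u_{j+1}-u_j)/h|^2 \le C/h$. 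Then a direct computation with the piecewise-linear interpolation shows $\|\nabla_x u^h\|_{L^2}^2 \le h\sum_j\int|\nabla u_j|^2$ and $\|\partial_y u^h\|_{L^2}^2 = h\sum_j\int|(u_{j+1}-u_j)/h|^2$, so $\|u^h\|_{H^1_0(\Omega)}\le C$. This gives part~(1) and (up to subsequence) a weak limit $u^h\rightharpoonup w$ in $H^1_0(\Omega)$, strong in $L^2(\Omega)$ by Rellich.

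\textbf{Step 2 (passing to the limit in linear terms).} For $\varphi\in\mathcal C^\infty_c(\Omega)$, set $\varphi_j=\varphi(\cdot,jh)$ and use $h\bm\varphi$ as a test function in \eqref{eq:Ph weak vector}:
\[
h\sum_{j=1}^N\int a(|\nabla u_j|)\nabla u_j\cdot\nabla\varphi_j + h\sum_{j=0}^N\int \tfrac{u_{j+1}-u_j}{h}\tfrac{\varphi_{j+1}-\varphi_j}{h} = h\sum_{j=1}^N\int f_j\varphi_j.
\]
The right-hand side converges to $\int_\Omega f\varphi$ as a Riemann sum combined with $L^2$ convergence. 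The $y$-derivative term equals $\int_\Omega \partial_y u^h\cdot (\text{piecewise constant approx of }\partial_y\varphi)$, and the piecewise constant converges uniformly to $\partial_y\varphi$ while $\partial_y u^h\rightharpoonup \partial_y w$ in $L^2$, yielding $\int_\Omega\partial_y w\,\partial_y\varphi$.

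\textbf{Step 3 (nonlinear term via Minty).} Introducing the piecewise-constant interpolant $\tilde u^h(x,y)=u_j(x)$ on $[jh,(j+1)h)$, one shows directly $\|\tilde u^h - u^h\|_{L^2}^2 \le C h^2$; together with boundedness of $\nabla_x\tilde u^h$ in $L^2$ (from the energy estimate), this forces $\nabla_x\tilde u^h\rightharpoonup \nabla_x w$ weakly in $L^2$. The nonlinear term in the weak formulation reads $\int_\Omega a(|\nabla_x\tilde u^h|)\nabla_x\tilde u^h\cdot\nabla_x\tilde\varphi^h$, which is bounded in $L^2$ since $|a(t)t|\le t/\varepsilon$; extract a further subsequence with $a(|\nabla_x\tilde u^h|)\nabla_x\tilde u^h \rightharpoonup \chi$ in $L^2$. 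The limit equation gives $-\operatorname{div}_x\chi - w_{yy}=f$ in $\mathcal D'$. To identify $\chi = a(|\nabla_x w|)\nabla_x w$, I apply the monotonicity inequality for any $\psi\in\mathcal C^\infty_c(\Omega)$ with piecewise-constant-in-$y$ sampling $\psi^h$:
\[
0\le \int(a(|\nabla_x\tilde u^h|)\nabla_x\tilde u^h - a(|\nabla_x\psi^h|)\nabla_x\psi^h)\cdot(\nabla_x\tilde u^h - \nabla_x\psi^h).
\]
The key input is the discrete energy identity (testing \eqref{eq:Ph} with $\mathbf u^h$ scaled by $h$), which gives $\int a(|\nabla_x\tilde u^h|)|\nabla_x\tilde u^h|^2 = \int f^h\tilde u^h - \int|\partial_y u^h|^2$. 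Taking $\limsup$ and using weak lower semicontinuity of $\int|\partial_y u^h|^2$ bounds this by $\int fw - \int|\partial_y w|^2$. Combining with the limit equation for $\chi$ yields, after standard manipulations, the Minty inequality
\[
\int f(w-\psi) \ge \int \partial_y w(\partial_y w-\partial_y\psi) + \int a(|\nabla_x\psi|)\nabla_x\psi\cdot(\nabla_x w-\nabla_x\psi), \quad \forall\psi\in H^1_0(\Omega).
\]
Setting $\psi = w \mp \lambda\zeta$ with $\zeta\in\mathcal C^\infty_c$, dividing by $\lambda>0$, and letting $\lambda\to 0^\pm$ (using continuity of $a$) identifies $w$ as the weak solution of \eqref{eq:P}. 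By the uniqueness established in the Introduction, $w=u$; since the limit is unique the whole sequence converges.

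\textbf{Main obstacle.} The delicate point is the identification of the nonlinear limit: weak convergence of $\nabla_x u^h$ does not suffice for the nonlinear quantity $a(|\nabla_x u^h|)\nabla_x u^h$. The required Minty-type argument depends essentially on the strong monotonicity provided by $\beta'\ge\varepsilon$, on the $L^2$-closeness of $\tilde u^h$ and $u^h$ (controlled by the $y$-difference bounds), and on the discrete energy identity used to control $\limsup \int a(|\nabla_x\tilde u^h|)|\nabla_x\tilde u^h|^2$.
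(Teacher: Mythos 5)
Your proposal is correct, and it follows the paper's skeleton (uniform energy bound from testing \eqref{eq:Ph weak vector} with $\mathbf u^h$, weak compactness, identification of the limit by monotonicity, uniqueness to upgrade subsequential to full convergence), but the identification step is executed by a genuinely different variant of the monotonicity method. The paper never introduces a weak limit of the nonlinear flux: it writes the Minty inequality already at the discrete level, with the nonlinearity evaluated only at a fixed smooth test function $\varphi$ (i.e. $h\sum_j\int_{\Omega_1} a(|\nabla_x\varphi|)\nabla_x\varphi\cdot\nabla_x(\varphi-u^h)+\int_\Omega \partial_y\varphi\,\partial_y(\varphi-u^h)\ge \int f^h(\varphi-u^h)+R(h)$, with Taylor/Riemann-sum remainders), so that passing to the limit requires only the weak convergence $u^h\rightharpoonup w$ and smoothness of $\varphi$ and $f$; then density and the perturbation $\varphi=u+\lambda\zeta$, $\lambda\to0^\pm$, finish the argument. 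You instead run the classical Browder--Minty compactness scheme: extract $\chi$ as the $L^2$-weak limit of $a(|\nabla_x\tilde u^h|)\nabla_x\tilde u^h$, derive the limit equation $-\diver_x\chi-w_{yy}=f$, and identify $\chi$ through the discrete energy identity, the estimate $\|\tilde u^h-u^h\|_{L^2}\le Ch$, weak lower semicontinuity of $\int|\partial_y u^h|^2$, and testing the limit equation with $w$. Both routes are sound; the paper's buys economy (no flux limit $\chi$, no energy-limsup step, no need to use $w$ itself as a test function in the limit equation), while yours is the more standard and arguably more robust template, and it makes explicit the piecewise-constant interpolant and the $O(h)$ closeness estimate that the paper leaves implicit in its remainder terms $R_1,R_2$. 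Two small points you should make explicit to be complete: the monotonicity of $\xi\mapsto a(|\xi|)\xi$ has to be justified (it follows from $\beta$ non-decreasing, or from \eqref{eq:Cianchi} as the paper notes via Cianchi--Maz'ya), and the passage from smooth $\psi$ to $\psi\in H^1_0(\Omega)$ in your final Minty inequality (needed for $\psi=w\mp\lambda\zeta$) requires the continuity/growth bound $|a(t)t|\le t/\ee$, exactly as in the paper's density step.
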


\begin{remark}
	\label{rem:Minty's trick}
	We will apply the old trick {of} Minty \cite{Minty1962} (see also \cite[\S 5.1.3]{evans1988weak+convergence+methods} and \cite{Evans1980}): if $A$ is a monotone operator and $Au = f$, then for all test functions $\varphi$ we have
	$
	0 \le (Au - A\varphi, u - \varphi ) = (f - A\varphi, u - \varphi)
	$
	hence $$(A\varphi, \varphi - u) \ge (f, \varphi - u).$$
	One then recovers the equation by letting $\varphi = u + \lambda \psi$, so $\lambda (A(u + \lambda \psi), \psi) \ge \lambda (f, \psi)$. As $\lambda \to 0^+$ one has $(Au,\psi) \ge (f,\psi)$, while as $\lambda \to 0^-$ one has $(Au, \psi) \le (f, \psi)$. Hence $(Au, \psi) = (f,\psi)$, or $Au = f$. In particular, this trick applies if
	\begin{equation*}
		(A u, v) = \int_\Omega E (\nabla u) \nabla v
	\end{equation*} 
	and $E: \mathbb R^n \to \mathbb R^n$ is such that $(E(\xi) - E(\eta)) \cdot (\xi - \eta) \ge 0$ for all $\xi, \eta \in \mathbb R^n$. This is very advantageous when passing to the limit in formulations of type $(Au_k, v) = (f_k, v)$. It is in general difficult to pass to the limit under the nonlinearity, whereas in $(Av, v - u_k) \ge (f_k, v - u_k)$ the nonlinear term $Av$ is fixed and if $u_k$ has a weak limit and $f_k$ has a strong limit we can proceed. As we will see in \Cref{sec:proof of comparison general}, this is even more advantageous if we also have a sequence of operators $A_k$. 
\end{remark}

\begin{remark}
	We recall that the hypothesis \eqref{eq:beta smooth} make the problem elliptic of type $p = 2$ in $x$ and $y$, hence the natural space is $H_0^1 (\Omega)$.
\end{remark}

\begin{proof}[Proof of \Cref{thm:h to 0}]
	Let us check that $u^h$ is a bounded sequence in $H_0^1 (\Omega)$. We compute
	\begin{align*}
	\int_{\Omega_2}  \int_{\Omega_1} |\nabla_x u^h (x,y)|^2 dx  dy  &= \sum_{j=1}^N \int_{jh}^{(j+1)h} \int_{\Omega_1} \left|\nabla_x \left( u^h_j (x) + \frac{u^h_{j+1} (x) - u^h_j (x)}{h} (y - jh) \right) \right |^2 dx \\
	& \le C h \sum_{j=1}^N \int_{\Omega_1} \left|\nabla_x u^h_j (x) \right |^2 dx .
	\end{align*}
	On the other hand
	\begin{equation*}
	\frac{\partial u^h}{\partial y} (x,y) = \frac{u^h_{j+1} (x) - u^h_j (x)}{h}  \qquad j = \left \lfloor \frac{y}{h} \right \rfloor.
	\end{equation*}
	From \eqref{eq:Ph weak vector} we deduce that
	\begin{equation*}
		\int_{\Omega_1} \sum_{j=1}^N a(|\nabla u^h_j|) |\nabla u^h_j|^2 dx
		+ \int_{\Omega_1}  \sum_{j=0}^N \left( \frac{\partial u^h}{\partial y}  \right)^2 dx
		= \int_{\Omega_1} \sum_{j=1}^N f_j^h  u_j^hdx,
	\end{equation*}
	Since $a \ge \ee$ and the previous estimate, we deduce that
		\begin{equation*}
	\ee \int_{\Omega_1} \int_{\Omega_2} |\nabla u_j^h|^2 dydx
	+ \int_{\Omega_1}  \int_{\Omega_2} \left( \frac{\partial u^h}{\partial y}  \right)^2 dydx
	\le C \| f^h \|_{L^2 (\Omega)} \|u^h \|_{L^2 (\Omega)}.
	\end{equation*}
	Applying Poincaré's inequality we deduce the estimate.
	Thus, there exists $u \in H_0^1 (\Omega)$ such that
	\begin{equation*}
{u^h} \rightharpoonup u \text { in } H_0^1 (\Omega).
	\end{equation*}
	Let us check that $u$ is a solution of \eqref{eq:P}. Let $\varphi \in \mathcal C^4 (\overline \Omega) \cap \cC_0 (\overline \Omega) $.	Define
	\begin{equation*}
	 D_y^h \varphi (x,y) = \frac{\varphi (x, (j+1)h) - \varphi (x,jh)}{h}, \qquad j = \left \lfloor \frac{y}{h} \right \rfloor.
	 \end{equation*} 
	Going back to the weak formulation \eqref{eq:Ph weak vector} with $\varphi_j (x) = \varphi (x,jh)$ we have
	\begin{align}
	\int_{\Omega_1} &\sum_{j=1}^N E(\nabla_x u^h (x , jh)  ) \cdot \nabla_x \varphi (x, jh) dx \nonumber +  \int_{\Omega_1}  \sum_{j=1}^N \frac{\partial u^h}{\partial y} (x,jh)  D_y^h \varphi (x,jh) dx \nonumber \\ 
	&= \int_{\Omega_1} \sum_{j=1}^N f (x, jh)  \varphi (x, jh) dx.
	\label{eq:h to 0 1}
	\end{align}
	where, for convenience, we write $E(\xi) = a(|\xi) \xi$.
	Since the derivatives with respect to $y$ are piecewise constant
	\begin{align*}
		 \int_{\Omega_1} &h \sum_{j=1}^N E(\nabla_x u^h (x , jh)  ) \cdot \nabla_x \varphi (x, jh)  dx \nonumber 
		 +  \int_{\Omega_1}  \int_{\Omega_2 } \frac{\partial u^h}{\partial y} (x,y)  D_y^h \varphi  (x,y) dy \, dx \nonumber \\ 
		&= \int_{\Omega_1} h\sum_{j=1}^N f (x, jh)  \varphi (x, jh) dx.
	\end{align*}
	By the Taylor expansion, we know that
	\begin{equation*}
	\left \| \frac{\partial \varphi}{\partial y } - D_y^h u \right \|_{L^\infty} \le h	\left \| \frac{\partial^2 \varphi}{\partial y^2 } \right \|_{L^\infty}
	\end{equation*}
	Thus
	\begin{align*}
	\int_{\Omega_1} & h \sum_{j=1}^N E(\nabla_x u^h (x , jh)  ) \cdot \nabla_x \varphi (x, jh)  dx \nonumber
	+  \int_{\Omega_1}  \int_{\Omega_2 } \frac{\partial u^h}{\partial y}   \frac{\partial \varphi}{\partial y} dy \, dx 
	= \int_{\Omega_1} h \sum_{j=1}^N f (x, jh)  \varphi (x, jh) dx + R_1(h)
	\end{align*}
	where
	\begin{equation*}
		|R(h)| \le C h \left \| \frac{\partial u^h}{\partial y} (x,y) \right\|_{L^2} \left \| \frac{\partial^2 \varphi }{\partial y^2} (x,y) \right\|_{L^2}  \le C h.
	\end{equation*}
	Since $E$ is monotone we can apply Minty's trick (see \Cref{rem:Minty's trick}). We can write
		\begin{align*}
	\int_{\Omega_1} &h \sum_{j=1}^N E(\nabla_x \varphi (x , jh)  ) \cdot \nabla_x (\varphi (x,jh) - u^h (x,jh)) dx \nonumber
	+  \int_{\Omega_1}  \int_{\Omega_2 }  \frac{\partial \varphi}{\partial y } (x,y) \left(  \frac{\partial \varphi}{\partial y } (x,y)  -  \frac{\partial u^h}{\partial y } (x,y) \right) dy \, dx \nonumber \\ 
	&\ge \int_{\Omega_1} h \sum_{j=1}^N f (x, jh)  (\varphi (x, jh)  - u^h (x, jh))dx + R_1(h).
	\end{align*}
	Once the dangerous $u^h$ has been removed from the nonlinearity, we can apply the regularity of $\varphi$ and $f$ to deduce
	\begin{align*}
	\int_{\Omega_1} &\int_{\Omega_2} E(\nabla_x \varphi (x , y)  ) \cdot \nabla_x (\varphi (x,y) - u^h (x,y)) dx dy \nonumber 
	+  \int_{\Omega_1}  \int_{\Omega_2 }  \frac{\partial \varphi}{\partial y } (x,y) \left(  \frac{\partial \varphi}{\partial y } (x,y)  -  \frac{\partial u^h}{\partial y } (x,y) \right) dy \, dx \nonumber \\ 
	&\ge \int_{\Omega_1} \int_{\Omega_2} f (x, y)  (\varphi (x, y)  - u^h (x, y))dx + R_2(h).
	\end{align*}
	where $R_2 (h)$ also tends to zero. 
	So we can pass to the limit to deduce
	\begin{align}
	\int_{\Omega_1}& \int_{\Omega_2}E(\nabla_x \varphi (x , y)  )   \cdot \nabla_x (\varphi (x,y) - u (x,y)) dx dy \nonumber 
	+  \int_{\Omega_1}  \int_{\Omega_2 }  \frac{\partial \varphi}{\partial y } (x,y) \left(  \frac{\partial \varphi}{\partial y } (x,y)  -  \frac{\partial u}{\partial y } (x,y) \right) dy \, dx \nonumber \\ 
	&\ge \int_{\Omega_1} \int_{\Omega_2} f (x, y)  (\varphi (x, y)  - u (x, y))dx .
	\end{align}
	By density, we deduce that the formula also holds for $\varphi \in H_0^1 (\Omega)$. We take $\varphi = u + \lambda w$ with $w \in H_0^1 (\Omega)$. As $\lambda \to 0 ^\pm$ we recover the equation \eqref{eq:P}. This completes the proof.
\end{proof}

\subsection{Proof of \Cref{thm:main smooth}}
\label{sec:comparison in smooth case}

Let $f \in \mathcal C_c^\infty (\Omega)$. We construct the sequence given by \Cref{thm:h to 0}. We have that $u^h \to u$ in $L^1 (\Omega)$ as $h \to 0$. Analogously for $\Omega^\#$ we have that $v^h \to v$ in $L^1(\Omega^\#)$.
Therefore $(u^h)^* \to u^*$ and $(v^h)^* \to v^*$ in $L^1(\Omega_1^* \times \Omega_2)$.  Due to \Cref{prop:Ph star comparison} we have that
\begin{equation*}
	\int_0^s (u^h)^*(\sigma,y) d\sigma \le \int_0^s (v^h)^*(\sigma,y) d\sigma, \qquad \forall s \in (0,|\Omega_1|) , y \in \Omega_2.	
\end{equation*}
Passing to the limit we recover the result.

\section{Proof of the comparison result: \Cref{thm:main}}
\label{sec:proof of comparison general}

Since $A(t) = \beta(t) t$ is convex, by Moreau's theorem (see, e.g. \cite{Brezis1971MonotonicityMethods}), we define its Moreau-Yosida approximation which, for $t \ge 0$,
\begin{equation}
	A_\ee  (t) = \inf _{s \ge 0} \left(  \frac{1}{2\ee} | t-s|^2 + A(s)   \right).
\end{equation}
It is known that this function is differentiable and that its derivative is the Yosida approximation of the subdifferential of $A$, $\partial A$, of formula
\begin{equation*}
	A_\ee' (t) =  \frac{I - P_\ee (t) }{\ee},  \qquad   \text{where } P_\ee = ( I + \ee \partial A )^{-1}.
\end{equation*}
It is known that $A_\ee'$  is Lipschitz continuous of constant $1 / \ee$ and monotone increasing. Therefore $A_\ee$ is of class $C^1$. It is well known that, for all $t \ge 0$,
\begin{equation}
	\label{eq:Moreau-Yosida order}
	A(P_\ee (t)) \le A_\ee (t) \le A (t)
\end{equation}
and $J_\ee(t) \to t$ as $t \to 0$ and furthermore, $P_\ee $ converges upwards to $t$ and $A_\ee (t)$ converges upwards to $A(t)$.

Then, we define 
\begin{equation*}
	\beta_{\ee,\tau} (t) = \frac{A_\ee (t)}{t} + \tau t, \qquad a_{\ee,\tau} (t) = \frac{A_\ee (t)}{t^2} + \tau.
\end{equation*}
Since $A_\ee$ is convex and non-negative, $t A_{\ee} ' - A_\ee \ge 0$. On the other hand, since $A_\ee'(t)$ is Lipschitz and $A_\ee'(0) = 0$ we have that $A_\ee ' (t) \le t/\ee $.  Hence we check that $\tau \le \beta_{\ee,\tau}' (t) \le \frac 1 \ee + \tau $ and it is continuous. Clearly $\beta_{\ee,\tau} (t) t$ is convex. Hence $\beta_{\ee,\tau}$ satisfies \eqref{eq:beta smooth}. We also consider a sequence $f_\delta \in \cC^\infty_c (\Omega)$ such that
\begin{equation*}
	f_\delta \rightharpoonup f \text{ in } L^{\max\{2,p\}} (\Omega) \quad \text{as } \delta \to 0.
\end{equation*}

\paragraph{Step 1. For $f_\delta$ and $ \beta_{\ee,\tau}$} 

We construct the weak solutions $u_{\ee,\tau,\delta}$ and $v_{\ee,,\tau,\delta}$ corresponding to $a_{\ee,\tau}$ and $f_\delta$. Since $\beta_{\ee,\tau,\delta}(t) $ satisfies \eqref{eq:beta smooth} we can apply \Cref{thm:main smooth} (with $a \equiv a_{\ee,\tau}$ and $f \equiv f_\delta$) we have that
\begin{equation*}
	\int_0^s u^*_{\ee,\tau ,\delta} (\sigma,y) d\sigma \le \int_0^s v^*_{\ee, \tau ,\delta} (\sigma,y) d\sigma, \qquad \text{ a.e. } (s,y) \in \Omega_1^* \times \Omega_2.
\end{equation*}
Using $u_{\ee,\tau,\delta}$ as test function in the weak formulation we have that
\begin{align*}
	\int_{ \Omega } A_{\ee} (|\nabla_x u_{\ee,\tau,\delta}|) &+ \tau \int_{ \Omega } |\nabla_x u_{\ee,\tau,\delta}|^2 + \int_{ \Omega } |\nabla_y u_{\ee,\tau,\delta}|^2 \le C \| f_\delta \|_{L^{2}} \| u_{\ee,\tau,\delta} \|_{L^{2}}.
\end{align*}
Due to the Poincaré inequality in the variable $y$, we have that
\begin{equation}
	\label{eq:main proof 1}
		\int_{ \Omega } A_{\ee} (|\nabla_x u_{\ee,\tau,\delta}|)  + \tau \int_{ \Omega } |\nabla_x u_{\ee,\tau,\delta}|^2 + \int_{ \Omega } |\nabla_y u_{\ee,\tau,\delta}|^2 \le C \| f_\delta \|_{L^{2}}.
\end{equation}
and a similar bound for $v_{\ee,\tau,\delta}$. 
Here we are in the hypothesis \eqref{eq:Cianchi}, hence we can apply \cite[Lemma 4.2]{Cianchi2014} to show that $ E_{\ee,\tau} (\xi) = a_{\ee,\tau}(|\xi|) \xi $ is a monotone operator
\begin{equation*}
	\Big (a_{\ee,\tau} (|\xi|) \xi - a_{\ee,\tau} (|\eta|) \eta \Big ) \cdot (\xi - \eta) \ge 0 , \qquad \forall \xi, \eta \in \RR^n.
\end{equation*} 
Hence, we can apply Minty's trick (see \Cref{rem:Minty's trick}).
In our setting this reads 
\begin{equation}
	\label{eq:approx equation ee tau delta}
	\int_ \Omega a_{\ee,\tau} (|\nabla_x \varphi|) \nabla_x \varphi \cdot \nabla_x ( \varphi - u_{\ee,\tau,\delta} ) + \int_ \Omega \nabla_y \varphi   \cdot \nabla_y (\varphi - u_{\ee,\tau,\delta})  \ge  \int_ \Omega f (\varphi-u_{\ee,\tau,\delta}), \qquad \forall \varphi \in W^{1,\infty}_0 (\Omega).
\end{equation}

\paragraph{Step 2. Limit as $\delta \to 0$.} 
We pass to the limit as $\delta \to 0$. We have that $u_{\ee,\tau\delta} \in H_0^1 (\Omega)$ is uniformly bounded. Hence
\begin{align*}
	u_{\ee,\tau,\delta} &\to u_{\ee,\tau}   \quad \text{ in }L^2 (\Omega) , \\
	\nabla_y u_{\ee,\tau,\delta } &\rightharpoonup \nabla_y u_{\ee,\tau}  \quad \text{ in }L^2 (\Omega)^n , \\
	 \nabla_x u_{\ee,\delta }	 &\rightharpoonup \nabla_x u_{\ee,\tau} \quad  \text{ in }L^{2} (\Omega)^n.
\end{align*}
We can pass to the limit in \eqref{eq:approx equation ee tau delta} to recover
\begin{equation*}
	\int_ \Omega a_{\ee,\tau} (|\nabla_x \varphi|) \nabla_x \varphi \cdot \nabla_x (  \varphi - u_{\ee,\tau} ) + \int_ \Omega \nabla_y \varphi   \cdot \nabla_y ( \varphi - u_{\ee,\tau} )  \ge  \int_ \Omega f (\varphi -u_{\ee,\tau} ), \qquad \forall \varphi \in W^{1,\infty}_0 (\Omega).
\end{equation*}
	Since $A_\ee$ is convex, we can pass to the limit in \eqref{eq:main proof 1} to recover
	\begin{align}
	\label{eq:estimates in ee}
	 \int_{ \Omega } A_\ee   (|\nabla_x u_{\ee,\tau}|) &+ \tau \int_{ \Omega } |\nabla_x u_{\ee,\tau}|^2 + \int_{ \Omega } |\nabla_y u_{\ee,\tau}|^2 \le C.
	\end{align}
	Equivalent results apply to $v_{\ee,\tau,\delta}$ and $v_{\ee,\tau}$. Since $A_\ee (u_{\ee,\tau}) < +\infty$, it is immediate so see following \cite{ekeland+temam1999} that $u_{\ee,\tau}$ is a minimizer of the energy in $H_0^1 (\Omega)$
	\begin{equation}
	\label{eq:energy ee}
	J_{\ee,\tau}(w) = \int_{\Omega} \Big( B_\ee(|\nabla_x w|) + |\nabla_y w|^2 - f w \Big) dx\, dy + \frac{\tau}{2} \int_\Omega |\nabla_x w|^2 dx\, dy .
	\end{equation}
	where
	\begin{equation*}
		B_\ee (t) = \int_0^t \beta_\ee (\sigma) d\sigma.
	\end{equation*}
\paragraph{Step 3. Limit as $\varepsilon \to 0$.} 
	Due to the uniform bound \eqref{eq:estimates in ee}, up a to subsequence, we have, as $\ee \to 0$, that for every $\tau > 0$ there exists $u_\tau \in H_0^1(\Omega)$ such that, up to a subsequence,
\begin{align*}
	u_{\ee,\tau} &\to u_\tau  \quad  \text{ in }L^2 (\Omega) , \\
	\nabla u_{\ee,\tau } &\rightharpoonup \nabla u_\tau \quad  \text{ in }L^2 (\Omega)^n ,
\end{align*}
and the corresponding convergences for $v_\tau$.\\ 

Let us study the equation satisfied by $u_\tau$. Notice that, by pointwise convergence
\begin{equation*}
a_{\ee,\tau} (|\nabla_x \varphi|) \nabla_x \varphi \longrightarrow a_\tau (|\nabla_x \varphi|) \nabla_x \varphi \qquad \text{ a.e. } x \in \Omega.
\end{equation*}
Since the sequence is clearly uniformly bounded, we have that the convergence is strong in $L^q (\Omega)$ for any $1 \le q < +\infty$. This implies that $u$ is satisfies
\begin{equation}
	\label{eq:variational inequality delta}
	\int_ \Omega a_\tau (|\nabla_x \varphi|) \nabla_x \varphi \cdot (\nabla_x \varphi - \nabla_x u_\tau ) + \int_ \Omega \nabla_y \varphi   \cdot \nabla_y ( \varphi - u_\tau )  \ge  \int_ \Omega f (\varphi - u_\tau ), \qquad \forall \varphi \in W^{1,\infty}_0 (\Omega)
\end{equation}

Now it is not so simple to pass to the limit in the energy estimate \eqref{eq:estimates in ee}. We go back to the intrinsic energy.
Since $a_\tau(t) = A(t) / t^2 + \tau$, this is a variational formulation of the energy
\begin{equation*}
	J_\tau(w) = \begin{dcases} 
		J(w) + \frac{ \tau } 2  \int_\Omega |\nabla_x w |^2 & \text{if } B(|\nabla_x w|) \in L^1 (\Omega) \\
		+\infty & \text{otherwise}.
		\end{dcases} 
\end{equation*}
where $J$ is given by \eqref{eq:energy}. 

We now prove that $J_{\ee,\tau}$ $\Gamma$-converges in $H_0^1 (\Omega)$ to $J_{\tau}$ (see, e.g. \cite{DalMaso:1993}). This requires to prove two items:
\begin{enumerate}
	\item Given $w_\ee \to w$ in $H_0^1 (\Omega)$ show that $J_{\ee,\tau}(w) \le \liminf_\ee J_{\ee,\tau}(w_\ee)$. First, let us take a subsequence (still indexed as $\ee$) has as limit this $\liminf$. Due to strong convergence, a further subsequence has $\nabla w_\ee \to \nabla w$ converging a.e.. We can now apply Fatou's lemma to recover the result.
	
	\item For every $w \in H_0^1 (\Omega)$, show that that there exists $w_\ee \to w$ such that $J_\tau(w) \ge \limsup_\ee  J_{\ee,\tau} (w_{\ee,\tau})$. This is easy, we take $w_\ee = w$. Since $A_\ee $ converges upwards to $A$, so does $\beta_\ee$ to $\beta$ and so does $B_\ee $ to $B$. Therefore, by the Monotone Convergence Theorem we have the equality in the limits
	 $$J_\tau(w) = \lim_\ee  J_{\ee,\tau} (w).$$
	 Notice that this limit could be finite or not, depending on whether $B(|\nabla_x w|) \in L^1 (\Omega)$.
\end{enumerate}
Hence, the minimizers of $J_{\ee,\tau}$, i.e. $u_{\ee,\tau}$, converge to the minimizer of $J_\tau$. In particular, its already known limit $u_\tau$ is the minimizer of $J_\tau$. 
Hence, $u_\tau$ is a weak solution of
\begin{equation*}
	-\diver ( a(|\nabla_x  u_\tau|) \nabla_x  u_\tau ) - \tau \Delta_x u_\tau - \Delta_y u_\tau = f.
\end{equation*} 
Thus, we recover uniform bounds of $|\nabla_x u_\tau|^p$ and $|\nabla_y u_\tau|^2$.

\paragraph{Step 4. Limit as $\tau \to 0$.}  Due to the uniform bounds on the gradient, we have that there exists $u \in X^p (\Omega)$ such that, up to a subsequence,
\begin{align*}
u_{\tau} &\to u   \quad \text{ in }L^2 (\Omega) , \\
\nabla_y u_{\tau } &\rightharpoonup \nabla_y u  \quad \text{ in }L^2 (\Omega)^n , \\
\nabla_x u_{\tau }	 &\rightharpoonup \nabla_x u \quad  \text{ in }L^{p} (\Omega)^n.
\end{align*}
Since $a_\tau$ converges uniformly to $a$, we recover that $u$ satisfies
\begin{equation*}
	\int_ \Omega a (|\nabla_x \varphi|) \nabla_x \varphi \cdot (\nabla_x u - \nabla_x \varphi ) + \int_ \Omega \nabla_y \varphi   \cdot \nabla_y (u - \varphi)  \ge  \int_ \Omega f (u - \varphi), \qquad \forall \varphi \in W^{1,\infty}_0 (\Omega)
\end{equation*}
and the analogous for $v$. By Minty's trick, we show that $u$ is the unique weak solution of \eqref{eq:P}. 
Proceeding analogously for $v$ we have obtained
the solution of \eqref{eq:P sharp}. Since all limits can  been taken strongly in $L^1$, the mass comparison is preserved,
\begin{equation*}
\int_0^s u^* (\sigma,y) d\sigma \le \int_0^s v^* (\sigma,y) d\sigma, \qquad \text{ a.e. } (s,y) \in \Omega_1^* \times \Omega_2.
\end{equation*}
This completes the proof. \qed 
\section{Extensions, generalizations and open problems}
By some technical adaptations of the proof we will prove in an upcoming paper the following generalisations:

\begin{enumerate}
	\item The case of $\beta$ a multivalued maximal monotone graphs. Then, \eqref{eq:beta p estimates} becomes
	\begin{equation}
		C_1 (t^p - 1) \le t w ,\qquad \forall w \in \beta(t).
	\end{equation} 
	The approximation argument follows similarly to \cite[Theorem 2.15]{Barbu2010}. 
	
	\item Case of $\Omega_2 \in \mathbb R^{n_2}$ with $n_2 > 1$. The finite-differences approach still works, although with some modifications. The boundary nodes are no longer $j = 0, N$, and the structure of the matrix $D_2$ in \eqref{ineqcrucial} is more complicated. Some regularity assumptions on $\Omega$ will be required for the convergence of the finite difference scheme.
	
	\item Case of $\Omega$ not a product, but any general domain in $\mathbb R^{n_1+n_2}$. Then, one needs to replace $\Omega_1$ by the cuts $\Omega_y = \Omega \cap \{  (x, y) : x \in \mathbb R^n  \}$ and rearrange in each of them. This introduces some additional technical difficulties.

	\item Some more general operators in $y$ can be studied. For example, operators of the form $-\diver (A(y) \nabla_y u)$, under some assumptions on $A(y)$.
	
	\item The boundary condtion $u = 0$ in $\partial \Omega_1 \times \Omega_2$ is a known requirement for the rearrangement. However, one could set a Neumann boundary condition on $\Omega_1 \times \partial \Omega_2$. Our method can be adapted to this case by considering a discretisation of $\frac{\partial u}{\partial n}$ at the first and last nodes.
		
	\item One could add a zero order term, as in \cite{Diaz2016}.
\end{enumerate}

There are some alternative approaches that could prove succesful, and perhaps more direct, although they require stronger theory. They could be applied to more general operators in $y$.
\begin{enumerate} 
	\item   The structure \eqref{eq:Ph star subsol} shows that $U(s,y) = \int_0^s u(\sigma,y) d\sigma$ is such that $ \kappa_n(s) \partial^2 U / \partial s^2 \in L^\infty$ and $\nabla_y U \in L^2$ and a solution of 
	\begin{equation}
	\begin{dcases} 
		 \kappa_n(s) \beta \left (- \kappa_n(s) \,\frac{\partial^2 U}{\partial s^2}\right) -\Delta_y U \le F & \Omega_1^* \times \Omega_2  \\
		\frac{\partial U}{\partial s} (|\Omega_1|,y ) = 0, \quad U(0,y) = 0 , & y \in \Omega_2 \\
		U(s,y) = 0 & \Omega_1^* \times \partial \Omega_2.
	\end{dcases} 
	\end{equation}
{There is a comparison principle for viscosity solutions of this problem }(see \cite{crandall+ishii+lions1992users-guide-viscosity}).
{Then the difficulty is to show that $U$ is a viscosity solution.} 
We have avoided this difficulty through  the discretisation in $y$.
 For the linear case, the fact that weak regular solutions are viscosity solutions is a well-known fact (see \cite{Ishii1995}). Some recent works show that this is also the case for the $p$-Laplace operator \cite{Juutinen2003,Medina2019}
	
	\item 
	A natural replacement of the finite differences are the finite elements.
One {can} consider a mesh over an approximate domain $(\Omega_2)_h$, and a basis $(\varphi_j^h)$ over the mesh and writes an approximate solution
	\begin{equation*}
		u^h(x,y) = \sum_{j=1}^{N} u_j^h (x) \varphi_j^h(y), \qquad f_h(x,y) = \sum_{j=1}^N f_j^h(x) \varphi_j^h(y) \quad \textrm{ where } \quad f_j^h(x) = \int_{\Omega_2} f(x,y) \varphi_j^h (y) dy. 
	\end{equation*}
	The problem is written as a system of boundary value problems in $s$, a solution of which exists by minimization. 
{Via rearrangement one obtains} functions
	\begin{gather*}
		\int_0^s(u^h)^*(\sigma,y) d\sigma \le  \sum_{j=1}^{N} \int_0^s(u_j^h)^* (\sigma) \varphi_j^h(y) d\sigma\\
		U(s,y)  = \int_0^s (u^h)^*(\sigma,y) d\sigma = \sum_{j=1}^{N} \int_0^s (u_j^h)^*(\sigma,y) d\sigma\  \varphi_j(y) = \sum_{i=1}^N U_j(s) \varphi_j(y).
	\end{gather*}
	This should lead to a discrete problem in $U_j$ that still preserves the comparison principle. 
\end{enumerate}

\section*{Acknowledgements}
The research of J.I. D\'iaz and D. Gómez-Castro was partially supported by the
project ref. MTM2017-85449-P of the Ministerio de Ciencia, Innovaci\'on y Universidades --
Agencia Estatal de Investigaci\'on (Spain). 

\noindent The research of D. Gómez-Castro
was partially supported by grant PGC2018-098440-B-I00 from the Ministerio de
Ciencia, Innovaci\'on y Universidades -- Agencia Estatal de Investigaci\'on (Spain).

\noindent The research of A. Mercaldo was partially supported by Gruppo Nazionale per
l'Analisi Matematica, la Probabilit\`a e le loro Applicazioni (GNAMPA)
of the Istituto Nazionale di Alta Matematica (INdAM).

\bibliographystyle{abbrv}
\bibliography{bibliography.bib}

\end{document}